\numberwithin{equation}{section}
\newtheorem{lemma}[equation]{Lemma}
\newtheorem{theorem}[equation]{Theorem}
\newtheorem{proposition}[equation]{Proposition}
\newtheorem{corollary}[equation]{Corollary}
\newtheorem{claim*}{Claim}
\newtheorem{defn}[equation]{Definition}
\newtheorem{alg}[equation]{Algorithm}
\theoremstyle{definition}
\newtheorem{remark}[equation]{Remark}
\newtheorem{example}[equation]{Example}
\definecolor{darkgreen}{rgb}{0,0.5,0}
\definecolor{rem}{rgb}{0.8,0,0}
\definecolor{new}{rgb}{0.3,0.1,0.9}
\definecolor{reply}{rgb}{0,0,0.8}
\definecolor{gray}{gray}{0.7}
\renewcommand{\det}{\operatorname{det}}
\renewcommand{\gcd}{\text{gcd}}
\newcommand{\F}{\mathbb{F}}
\newcommand{\PP}{\mathbb{P}}
\newcommand{\Q}{\mathbb{Q}}
\newcommand{\rhobar}{{\overline{\rho}}}
\newcommand{\calO}{\mathcal{O}}
\newcommand{\fp}{\mathfrak{p}}
\newcommand{\Gal}{\text{Gal}}
\newcommand{\Frob}{\text{Frob}}
\newcommand{\GL}{\operatorname{GL}}
\newcommand{\SL}{\operatorname{SL}}
\newcommand{\SD}{\operatorname{SD}}
\newcommand{\Fbar}{{\overline{F}}}
\renewcommand{\ker}{\operatorname{ker}}
\DeclareMathOperator{\Ind}{Ind}
\DeclareMathOperator{\Art}{\operatorname{Art}}
\DeclareMathOperator{\Image}{Im}
\DeclareMathOperator{\Center}{Center}
\providecommand{\cl}[1]{\mathcal{#1}}
\providecommand{\p}{\mathfrak{p}}
\providecommand{\ger}[1]{\mathfrak{#1}}
\providecommand{\barra}[1]{\overline{#1}}
\providecommand{\tenso}{\otimes}
\providecommand{\Q}{\mathbb Q}
\providecommand{\ma}{\mathbb}
\title{On inertial types of elliptic curves}
\date{\today}
\author{Jose Castro-Moreno}
\address{Instituto de Ciencias Matem\'aticas (ICMAT),
          Nicol\'as Cabrera 13-15
         28049 Madrid, Spain}
\email{joseantonio.castro@icmat.es}
\author{Enric Florit}
\address{Universitat Oberta de Catalunya
Rambla del Poblenou, 156, 08018, Barcelona, Spain
         }
\email{efloritz@uoc.edu}
\author{Nuno Freitas}
\address{Instituto de Ciencias Matem\'aticas (ICMAT),
          Nicol\'as Cabrera 13-15
         28049 Madrid, Spain}
\email{nuno.freitas@icmat.es}
\thanks{Castro-Moreno PhD thesis project received the support of a fellowship from the ``La Caixa'' Foundation (ID 100010434) The fellowship code is LCF/BQ/DR24/12080026.}
\thanks{Florit was supported by the Spanish Ministry of Universities (FPU20/05059) and by grants PID2022-137605NB-I00 and 2021 SGR 0146.}
\thanks{Freitas was partly supported by the PID 2022-136944NB-I00 grant of the MICINN (Spain)}
\keywords{Inertial types, Galois representations, Elliptic Curves}
\subjclass[2010]{}
\begin{document}

\begin{abstract}
    We classify the inertial Weil-Deligne types arising from elliptic curves over all finite extensions $F/\Q_p$. Based on this classification, we give a fully explicit description of the types and implement an algorithm that computes all inertial types of elliptic curves defined over a given $F$. As an application, we determine all inertial types arising from elliptic curves over any extension $F/\Q_p$ of degree at most 3.
\end{abstract}

\maketitle

\section{Introduction}\label{S: Intro}

Let $p$ be prime and $F / \Q_p$ be a finite extension with algebraic closure
$\Fbar$. We denote the Weil group of~$F$ by $W_F \subset \Gal(\Fbar / F)$ and
we write $I_F \subset W_F$ for its inertia subgroup. For the purpose of this
introduction, given a representation $\rho : W_F \to \GL_2(\ma C)$, we will
call the restriction $\tau : = \rho|_{I_F}$ {\it an inertial type} (see
Section~\ref{S: Background} for precise definitions).

In the work~\cite{dembélé2024galoisinertialtypeselliptic} of the third author
with Dembélé and Voight, the authors give a completely explicit description of
the inertial types arising from the $\ell$-adic representations~$\rho_{E,\ell}$
attached to elliptic curves over $F = \Q_p$ for $p \neq \ell$. As discussed in
    {\it loc. cit.}, the study of inertial types has many applications, including
to modularity, to the classification of representations of $G_F$, and to
Diophantine equations. Moreover, a recent application of the results
in~\cite{dembélé2024galoisinertialtypeselliptic} appeared in the work of
Bosco~\cite{bosco20233adicrepresentationsarisingelliptic} where the
$(\varphi,\Gal(K/\Q_3))$-modules associated to $3$-adic representations of
elliptic curves over~$\Q_3$ are explicitly computed.

The goal of this paper is to extend the results
of~\cite{dembélé2024galoisinertialtypeselliptic} over~$\Q_p$ to any~$F$ as
above. As explained in Section~\ref{S: Uniform cases}, the tame inertial types
and those arising from elliptic curves with potentially multiplicative
reduction are well understood, so our focus are inertial types arising from
elliptic curves with potentially good reduction over 2-adic and 3-adic fields.
When working with this generality, we are faced with new important phenomena.
Specifically, for extensions~$F/\Q_2$, exceptional types can have image the
quaternion group~$Q_8$ or $\SL_2(\F_3)$ (instead of only the latter) and {\it
        triply imprimitive} types exist, i.e., types that can be obtained by induction
of characters from three different quadratic extensions of $F$. Furthermore,
the number of inertial types increases with the degree of~$F$ whilst over
$F/\Q_p$ with $p \geq 5$ the types are limited and can be described uniformly.

We will now state our main results. Let $C_n$ denote the cyclic group with~$n$
elements.

\begin{theorem}\label{T: Main Theorem for p=3}
    Let $F/\ma Q_3$ be  a finite extension and $\tau : I_F \to \GL_2(\ma C)$ an inertial type defined over~$F$. Then,~$\tau$ is the inertial type of an elliptic curve with potentially good reduction if and only if it has trivial determinant and one of the following holds.
    \begin{enumerate}[(i)]
        \item $\tau$ is a principal series type with image isomorphic to~$C_2$, $C_3$, $C_4$ or $C_6$;
        \item $\tau$ is a  unramified supercuspidal type with image isomorphic to~$C_3$, $C_4$ or $C_6$;
        \item $\tau$ is a ramified supercuspidal type with image isomorphic to~$C_3\rtimes C_4$.
    \end{enumerate}
\end{theorem}

To state and prove our 2-adic theorem, it is helpful to introduce some
additional notation. Let~$F/\Q_2$ and~$L/F$ be finite extensions. We will say
that $L/F$ satisfies property {\bf (H1)} when
\begin{itemize}
    \item $L/F$ is a cubic Galois extension if $\mu_3\subset F$, or
    \item $L/F$ is the Galois closure of a ramified cubic extension $L'/F$ if $\mu_3\not\subset F$.
\end{itemize}

Let $L/F$ satisfy {\bf (H1)} and $K_i = L(\sqrt{d_i})$ be different quadratic
extensions of $L$ for $i=1,2,3$. We will say that the fields $K_i$ satisfy
property {\bf (H2)} if the $d_i$ generate a $2$-dimensional irreducible Galois
module of $L^{\times}/(L^{\times})^2$ under the action of $\Gal(L/F)$.
\begin{remark}
    If $\mu_3\subset F$, then any cubic extension is normal and there are exactly $4$ cubic extensions satisfying {\bf (H1)}. However, when $\mu_3\not\subset F$ the Galois closure of the three cubic ramified extensions $L'/F$ is the same, so property {\bf (H1)} uniquely determines $L/F$ and $\Gal(L/F) \simeq S_3$.
\end{remark}

Our 2-adic theorem is the following.

\begin{theorem}\label{T: Main Theorem for p=2}
    Let $F/\ma Q_2$ be  a finite extension and $\tau : I_F \to \GL_2(\ma C)$ an inertial type defined over~$F$. Then,~$\tau$ is the inertial type of an elliptic curve with potentially good reduction if and only if $\tau$ has trivial determinant and one of the following holds.
    \begin{enumerate}[(i)]
        \item $\tau$ is a principal series type with image isomorphic to $C_2$, $C_3$, $C_4$ or $C_6$;
        \item $\tau$ is a unramified supercuspidal type with image isomorphic to $C_3$, $C_4$ or $C_6$;
        \item $\tau$ is a ramified supercuspidal type with image isomorphic to~$Q_8$ and such that:\begin{enumerate}[(a)]
                  \item if $\mu_3\subset F$, then $\tau$ is triply imprimitive,
                  \item if $\mu_3\not\subset F$, then $\tau$ is simply imprimitive;
              \end{enumerate}
        \item There is an extension~$L/F$ satisfying~{\bf (H1)} and a
              representation~$\rho:W_L\xrightarrow{}\GL_2(\ma C)$ such that:\begin{enumerate}
                  \item $\rho\vert_{I_L}=\tau\vert_{I_L}$,
                  \item $\Image\rho=\Image\tau_L\simeq Q_8$,
                  \item $\rho$ is triply imprimitive induced from $K_1$, $K_2$ and~$K_3$ satisfying {\bf (H2)}, and
                  \item  $\rho^{\sigma}\simeq\rho$ for all $\sigma\in \Gal(L/F)$, where $\rho^{\sigma}(t)=\rho(\sigma^{-1}t\sigma)$.
              \end{enumerate}
              In this case $\tau$ is exceptional.
    \end{enumerate}
\end{theorem}
We observe that there is no condition on the conductor of $\tau$ in the two theorems above whilst there is the well known bound $v_F(N_E) \leq 2+3v_F(3)+5v_F(2)$ for the conductor $N_E$ of elliptic curves $E/F$  (see~\cite[Ch.~IV, Thm~10.4]{Silverman1994}). This has the surprising consequence that any type satisfying our theorems automatically satisfies this bound as well (see also Remark~\ref{R: Q8conductor}).

Let us also highlight Theorem~\ref{Inertia field determine the type}, as it is crucial to obtain the above results; it states that an inertial type $\tau$ arising on an elliptic curves is determined by its kernel. In other words, the extension of $F^{un}$ cut out by $\tau$ determines $\tau$.

 We have also made the above results computationally accessible. To this end, we
first give equivalent formulations of our main theorems in terms of inertia
characters, namely, Theorems~\ref{T: explicit conditions p=3} and~\ref{T:
    explicit conditions p=2}, respectively. Then, based on this formulation, we
discuss in Section~\ref{S: algo} an algorithm that determines a complete
classification of inertial types arising from elliptic curves defined over a
given~$F$; this algorithm has been fully implemented in {\tt
        Magma}~\cite{magma}. As an application, for $p=2$ and $p=3$, we computed and
tabulated all inertial types arising from elliptic curves defined over all
quadratic and cubic extensions of~$\Q_p$; the algorithm can be found in~\cite{MAGMAFiles} and the list of types in~\cite{InTypeWebPage}. Since the description of types for $p \geq
    5$ is uniform and well known, this effectively creates a database of all
inertial types arising from elliptic curves over any $F$ with $[F:\Q_p] \leq
    3$; to keep expanding this database it suffices to run our algorithm once over
any new field~$F$.

The Galois representations arising on the $\ell$-adic Tate module of an
elliptic curve $E$ defined over extensions $F/\Q_p$ with $p \neq \ell$ have
been studied by several authors, including the work of
Dokchitser--Dokchitser~\cite{Dokchitser_2008} and
Coppola~\cite{NirvannaWild3-adic, NirvannaWild2-adic, NirvannaWildCyclic}.
These works provide a description of the representation of inertia as an
abstract group together with the image of Frobenius; moreover, in most cases,
they describe a recipe to compute the field fixed by their kernel, from a given
Weierstrass model for $E$. A key difference between these works and the
approach we take here is that we analyze inertial types using the
classification of local Galois representations into principal series, Steinberg
and supercuspidal representations (as done
in~\cite{dembélé2024galoisinertialtypeselliptic}). This allows us to split the
types not in terms of its image as an abstract group but rather in terms of how
they arise; for instance, there are inertial types with image $Q_8$ that are
both ramified supercuspidal and exceptional and they need to be treated
differently in order to obtain a complete explicit description. Moreover, for a
concrete field~$F$ our algorithm classifies all types over~$F$, hence we know
exactly how many inertial types there are at each conductor and what is the
fixed fields by their kernels; this level of detail has applications to
Diophantine equations as discussed at length in~\cite{BCDFmulti}. It is also
common in Diophantine applications to have partial information about the
inertial type, for example, that it is supercuspidal induced from a certain
quadratic $K/F$. Our treatment allows to narrow the search by including
additional constraints lowering drastically the amount of computations needed,
which makes possible to find all inertial types with these constrains even for
large fields.

We want to highlight the following consequence of our approach. An inertial
type arising from an elliptic curve can also appear, for example, in a
classical or Hilbert modular form whose field of coefficients is not $\Q$ (and
so not corresponding to an elliptic curve by the Eichler--Shimura
correspondence). Since our description is independent of elliptic curves, it
can be used to study inertial types coming from other sources, because any type
satisfying the conditions in our main theorems is computed by our algorithm. An
application of this can be found in the work of
Pacetti--Torcomian~\cite{PacTor}, where they use our classification to
determine inertial types of Jacobians of hyperelliptic curves appearing in the
study of the Fermat equation $x^5 + y^p = z^3$; see Example~\ref{Example Ariel}
for more details.

To conclude this introduction, let us briefly describe the main differences
between our approach and that of~\cite{dembélé2024galoisinertialtypeselliptic}.
Firstly, the results in {\it loc. cit.} used to study nonexceptional types are
very explicit as the authors could rely on the specific fields they were
working with to describe concrete generators of their groups of units. Instead,
we generalize and present the necessary results in a unified way over any~$F$,
concluding that for $p=3$ all fields behave similarly and for that $p=2$ there
is a clear splitting between the fields containing the third roots of unity and
those that do not. Secondly, the study of the exceptional types in {\it loc.
        cit.} relies on the classification of $\GL_2(\F_3)$-extension of~$\Q_2$ due to
Bayer and Rio~\cite{BayRio}. In the absence of analogous results over~$F$, we
first base change an exceptional type to a suitable cubic extension of~$F$
where it becomes nonexceptional and we can apply the previously developed
theory, and then we descend it back to $F$. Lastly and more importantly, we
prove that all the inertial types we describe and the algorithm computes do
indeed arise from elliptic curves, without the need to exhibit a curve for each
type as done in~\cite{dembélé2024galoisinertialtypeselliptic}. This is an
interesting result on its own but also avoids a computationally heavy step in
the algorithm; indeed, since the number of types increases rapidly with the
degree of $F$, finding a curve for each type becomes impractical very quickly
(see also Remarks~\ref{R: comments on algorithm} and \ref{R: Curves are bad}).

\subsection{Outline} In Sections~\ref{S: Background}--\ref{S: section 3}, we recall basic results
and present some of our results about inertial types and elliptic curves. In
Section~\ref{S: Uniform cases}, we classify inertial types of elliptic curves
over any $F/\Q_p$ for $p \geq 5$. In Section~\ref{S: Necessary conditions}, we
present necessary conditions that inertial types of elliptic curves must
satisfy and, in Section~\ref{S: sufficient conditions}, we show that this
conditions are indeed sufficient. In Section~\ref{S: charactersformulation}, we
present how these conditions can be expressed purely in terms of characters.
Finally, Section~\ref{S: algo} discusses the algorithm that we have implemented
in {\tt Magma} together with some examples and remarks.

\subsection{Computational software}
We have developed a {\tt Magma} package to compute and work with inertial
types, see Section~\ref{S: algo} for details. All our code is available at
\cite{MAGMAFiles}, and the repository contains instructions and examples of how
it is used.

\subsection{Acknowledgements}
The authors would like to thank John Jones for pointing out the algorithms
in~\cite{Guardia} that find nice polynomials that help to describe some
inertial fields. They were extremely useful in the early development of our
    {\tt Magma} package. Enric Florit thanks the ICMAT and the number theory seminar at UAM for their hospitality in October 2025.

\section{Background material}\label{S: Background}

In this section, we recall the basic definitions and results needed for the
classification of inertial types of elliptic curves; we refer to ~\cite[\S
    2]{dembélé2024galoisinertialtypeselliptic} for a more detailed exposition.

\subsection{Notation}\label{Notation}

Let $p$ be a prime. Let $F$ be a finite extension of $\ma Q_p$, $\cl O_F$ its
ring of integers, $\p$ its maximal ideal, $\pi\in \p$ a uniformizer and
$v_F:F^{\times}\xrightarrow{}\ma C$ the valuation of $F$ normalized so that
$v_F(\pi)=1$. We denote the residue field $\calO_F/\fp$ by~$\ma F_q$ and by
$\overline{F}$ a fixed algebraic closure of~$F$. For any finite extension
$K/F$, let $K^{un} \subset \Fbar$ be the maximal unramified extension of~$K$;
we have $K^{un}=K F^{un}$.

Let $W_F$ be the Weil group of $F$ and $I_F$ its
inertia subgroup. The group $W_F$ is made into a topological group by declaring
that $I_F$ is open in $W_F$ and that the subspace topology of $I_F$ in~$W_F$ is
the same as the subspace topology of $I_F$ in $\Gal(\overline{F}/F)$. We denote by $W_{F}^{ab}$ be the maximal abelian quotient of $W_F$ and
$\Art_F:F^{\times}\xrightarrow{\sim} W^{ab}_F$ the local Artin reciprocity map
normalized so that $\Art(\pi)=\operatorname{\Frob_F}$ is the (class of a)
geometric Frobenius, i.e.,
$\operatorname{\Frob_F}(x^q)\equiv x \pmod{\p}$.

For a topological group $G$, by a character of~$G$ we mean a continuous group homomorphism $\theta:G\xrightarrow{} \ma C^{\times}$ and, given a morphism of
topological groups $\varphi:G\xrightarrow{}G'$, we say that $\theta$ factors
through $\varphi$ if there exists a character
$\theta':G'\xrightarrow[]{}\ma C$ such that
$\theta=\theta'\circ\varphi$; when $\varphi$ is understood, for example, when
$G'$ is a quotient of $G$ we will simply say that $\theta$ factors through
$G'$.

We will denote by $\operatorname{Ord}(\theta)$ the order
of a character. A character $\theta$ of $W_{F}$ must factor through
$W_{F}^{ab}$ and so we can use the Artin map to identify it with a character of
$F^{\times}$, denoted by $\theta^{A}:=\theta\circ \Art$. Given a
character~$\theta$ of~$W_F$, its conductor is the biggest ideal~$\p^m$ so that
$\theta^{A}\vert_{1+\p^m}$ is trivial, and so~$\theta^{A}\vert_{\cl
        O^{\times}_F}$ factors through
$\cl O_F^{\times}/(1+\p^m)\simeq(\cl O_F/\p^m)^{\times}$; we will denote the
exponent~$m$ as $m(\theta)$ and the conductor as
$\operatorname{Cond}(\theta)$, so that
$\operatorname{Cond}(\theta)=\p^{m(\theta)}$ and
$m(\theta)=v(\operatorname{Cond}(\theta))$.

Let $\omega: W_F \to \ma C^\times$ be the character corresponding via the Artin map to the norm
character~$|\cdot|_v$ on $F$, so that~$\omega(g)=q^{-a}$ for
$g|_{F^{un}}=\Frob^{a}$ with $a \in \ma Z$. Given a quadratic extension
$K/F$ we denote by $\psi_K$ the unique quadratic character of $W_F$ whose
kernel is $W_K$ and by $\varepsilon_K$ its restriction to inertia
$\varepsilon_K=\psi_K\vert_{I_F}$.

Finally we will denote by $Q_8$ the quaternion
groups of 8 elements and by $\SD_{16}$ the 2-sylow subgroup of
$\GL_2(\F_3)$.

\subsection{Weil--Deligne representations} \label{WDrepresentation}
\begin{defn}
    A $2$-dimensional Weil--Deligne (WD) representation is a pair $(\rho,N)$ where
    \begin{enumerate}[(i)]
        \item $\rho:W_F\longrightarrow\GL_2(\ma C)$ is a continuous representation,
        \item $N\in \GL_2(\mathbb{C})$ is a nilpotent operator,
        \item $\rho(g)N\rho(g)^{-1}=\omega(g)N$.
    \end{enumerate}
\end{defn}

We say that two Weil--Deligne representations $(\rho,N),(\rho',N')$ are
isomorphic if there exist $P\in \GL_2(\ma C)$ such that
$P\rho(g)P^{-1}=\rho(g)'$ for all $g$ and $PNP^{-1}=N'$.

A key concept in the study of Weil--Deligne representation is its conductor.
The definition of conductor for characters (i.e dimension 1 representations) is
the one given in~\S\ref{Notation}, and for general representation we refer
to~\cite[Chapter IV,\S 10]{Silverman1994}. For a WD-representation~$\rho$, we
denote its conductor and conductor exponent by $\operatorname{Cond}(\rho)$ and
$m(\rho)$, respectively.

Any 2-dimensional Weil--Deligne representation $\rho$ of~$W_F$ is isomorphic to
one of the following:
\begin{enumerate}[(i)]
    \item {\it Principal Series}: Take $N=0$ and
          $\rho=PS(\theta_1,\theta_2):=\theta_1\oplus\theta_2$,
          where $\theta_1,\theta_2 : W_F\xrightarrow{}\ma C$ are characters such that $\theta_1\theta_2^{-1}\neq
              \omega^{\pm1}$. We have $m(\rho)=m(\theta_1)+m(\theta_2)$.
    \item {\it Special or Steinberg}: Take $N=\left(\begin{smallmatrix} 0&1\\0&0\end{smallmatrix}\right)$ and
          $\rho=\operatorname{St}(\theta)$, where
          $\operatorname{St}(\theta)=\theta\oplus\omega\theta$ for a character $\theta:W_F\xrightarrow{} \ma C$. Here $m(\rho)=2m(\theta)$ if $\theta$ is ramified and $m(\rho)=1$ otherwise.
    \item {\it Supercuspidal representations}: Take $N=0$
          and $\rho$ an irreducible $2$-dimensional representation. If the projective
          image of $\rho$ is dihedral, then $\rho$ is called {\it imprimitive} or              {\it nonexceptional}, otherwise it is called {\it primitive} or {\it exceptional},
          in which case its projective image is $A_4$ or $S_4$. The exceptional types
          only occur when $p=2$. The conductor formula for the imprimitive case is given in~\eqref{E:condBC} below.
\end{enumerate}

The following result will come handy when dealing with twists of
representations.

\begin{lemma}\label{conductor of twists}
    Let $(\rho,N)$ be a 2-dimensional Weil--Deligne representation of $W_F$, such that $N=0$ and let $\theta$ be a character of $W_F$. If $m(\rho)\neq 2m(\theta)$, then $m(\theta\tenso \rho)=\operatorname{max}\{m(\rho),2m(\theta)\}$
\end{lemma}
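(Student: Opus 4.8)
The plan is to reduce the statement to a computation with Artin conductors and then exploit the ramification (break) filtration. Since $N=0$, the conductor of the Weil--Deligne representation agrees with the Artin conductor exponent $a(\rho)$ of the underlying Weil representation $\rho\colon W_F\to\GL_2(\ma C)$, i.e. $m(\rho)=a(\rho)$, and the same holds for $\theta\otimes\rho$ because tensoring a representation with $N=0$ by a character keeps $N=0$; thus it suffices to prove $a(\theta\otimes\rho)=\max\{a(\rho),2a(\theta)\}$ whenever $a(\rho)\neq 2a(\theta)$. Recall that $a(\rho)=\dim(V/V^{I_F})+\mathrm{Sw}(\rho)$, where $\mathrm{Sw}$ is the Swan conductor, and that $\rho|_{I_F}$ carries a break (slope) decomposition $V=\bigoplus_\lambda V(\lambda)$ with $\mathrm{Sw}(\rho)=\sum_\lambda \lambda\cdot\dim V(\lambda)$. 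The key tool is the ultrametric behaviour of breaks under twisting: if $\theta$ is ramified with (single) break $\mu=m(\theta)-1$, then $V(\lambda)\otimes\theta$ has break $\max\{\lambda,\mu\}$ whenever $\lambda\neq\mu$.

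The main case is $\rho$ irreducible, which is the one relevant to the supercuspidal types treated later in the paper. Here I would first observe that the higher ramification subgroups in the upper numbering are normal in $G_F$, so each space $V^{G_F^{(u)}}$, and hence each break piece $V(\lambda)$, is $W_F$-stable; irreducibility then forces a single break $\lambda$ and $V^{I_F}=0$. Consequently $m(\rho)=a(\rho)=2+2\lambda$, so that the hypothesis $m(\rho)\neq 2m(\theta)$ is exactly the condition $\lambda\neq\mu$. Applying the break-stability fact to the unique piece, $\theta\otimes\rho$ is again pure of break $\max\{\lambda,\mu\}$ with no inertia invariants, whence
\[
a(\theta\otimes\rho)=2+2\max\{\lambda,\mu\}=\max\{2+2\lambda,\,2+2\mu\}=\max\{m(\rho),2m(\theta)\}.
\]
The case of $\theta$ unramified is immediate, since then $\theta\otimes\rho$ and $\rho$ have the same restriction to $I_F$, so $a(\theta\otimes\rho)=a(\rho)=m(\rho)=\max\{m(\rho),0\}$.

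The remaining reducible case $\rho=PS(\theta_1,\theta_2)$ is where I expect the real difficulty, and it deserves to be isolated. Here $\theta\otimes\rho=PS(\theta\theta_1,\theta\theta_2)$ and additivity gives $m(\theta\otimes\rho)=m(\theta\theta_1)+m(\theta\theta_2)$, which I would evaluate using $m(\chi\psi)=\max\{m(\chi),m(\psi)\}$ when $m(\chi)\neq m(\psi)$ (and $m(\chi\psi)\le m(\chi)$ when the conductors coincide, where cancellation may occur). The subtlety, absent in the irreducible case, is that the two breaks $m(\theta_i)-1$ need not agree; one must track the size of $m(\theta)$ against both $m(\theta_1)$ and $m(\theta_2)$, and the clean equality comes out precisely when $m(\theta)$ is smaller than both or larger than both of the $m(\theta_i)$ — a condition which, through the single-break phenomenon, is automatic once $\rho$ is irreducible. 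I would therefore build the whole argument on the break decomposition: purity disposes of the irreducible (in particular supercuspidal) case at once, while the reducible case reduces to the explicit character computation above, with the placement of $m(\theta)$ relative to the two component conductors being the step that demands the most care.
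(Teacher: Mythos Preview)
Your treatment of the irreducible (supercuspidal) case via the break decomposition is clean and correct; it gives a self-contained alternative to the paper's proof, which simply cites Tunnell for that case. The key points --- that an irreducible $2$-dimensional $\rho$ has $V^{I_F}=0$ and is pure of a single break (because the upper-numbering ramification groups are normal in $G_F$, so each $V(\lambda)$ is $W_F$-stable), and that breaks satisfy an ultrametric law under twisting --- are all sound.

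The gap is in the principal series case, and it is not one that more care will close: the lemma as written is \emph{false} for general principal series. Take $p$ odd, let $\theta$ be a ramified character of $W_F$ with $m(\theta)=1$, and set $\rho=\theta^{-1}\oplus 1$ (a valid principal series since $\theta$ is ramified and $\omega$ is not, so $\theta^{-1}\ne\omega^{\pm1}$). Then $m(\rho)=1+0=1\ne 2=2m(\theta)$, so the hypothesis holds; yet $\theta\otimes\rho=1\oplus\theta$ has $m(\theta\otimes\rho)=0+1=1$, not $\max\{1,2\}=2$. Your own diagnosis is right on the mark: the equality holds only when $m(\theta)$ lies either below both $m(\theta_i)$ or above both, and the single hypothesis $m(\theta_1)+m(\theta_2)\ne 2m(\theta)$ does not force this when $m(\theta_1)\ne m(\theta_2)$. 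In every application in the paper (cf.\ Remark~\ref{Remark Conductor Twists}) the principal series types are of the shape $\chi\oplus\chi^{-1}$, so $m(\theta_1)=m(\theta_2)$; under that extra symmetry the ``simple computation'' the paper invokes does go through, and the lemma should be read with that implicit restriction.
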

\begin{proof}
    For the supercuspidal case see~\cite[Proposition 3.4]{Tunnell1978}, the principal series follows from a simple computation.
\end{proof}

\subsection{Nonexceptional supercuspidal representations}\label{S: Supercuspidal}

Let $K/F$ be a quadratic extension, $\psi_K : W_F\xrightarrow{}\ma C$ the
quadratic character associated to~$K$, that is, $\ker \psi_K =W_K$ and
$\varepsilon_K:=\psi_K\vert_{I_F}$. Let $s\in W_F$ be a lift of the non-trivial
element of $\Gal(K/F)$ and, for a character $\theta:W_K\xrightarrow{}\ma C$, we
define $\theta^{s}(g)=\theta(s^{-1}gs)$; since $W_K$ is normal in $W_F$ this is
independent of the choice of~$s$. By local class field theory
$(\theta^s)^A=\theta^A\circ s$.

Let $\theta:W_K\xrightarrow{} \ma C$ be a character satisfying~$\theta\neq
    \theta^{s}$. The representation $\rho=\Ind_{W_K}^{W_F} \theta$ is irreducible
and we call $(\rho,0)$ the {\it nonexceptional supercuspidal representation
        induced from/by~$\theta$}. Its conductor exponent satisfies
\begin{equation} \label{E:condBC}
    m(\rho) = \begin{cases}
        2m(\chi),            & \text{ if $K/F$ is unramified;} \\
        m(\chi) + m(\psi_K), & \text{ if $K/F$ is ramified. }\
    \end{cases}
\end{equation}

For convenience we recall the following auxiliary lemmas.

\begin{lemma}\label{Factor through the norm}
    The following are equivalent:
    \begin{enumerate}[(i)]
        \item $\theta^s=\theta$,
        \item $s(x)/x\in \ker \theta^{A}$ for all $x\in K^{\times}$,
        \item $\theta^{A}$ factors through the norm map $\operatorname{Nm_{K|F}}:K^{\times}\xrightarrow{}F^{\times}$.
    \end{enumerate}
\end{lemma}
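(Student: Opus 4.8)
The plan is to prove the chain of equivalences by establishing (i)$\Leftrightarrow$(ii) and (ii)$\Leftrightarrow$(iii) separately, relying on two ingredients: the compatibility $(\theta^s)^A=\theta^A\circ s$ recorded just above the lemma, and Hilbert's Theorem~90 for the cyclic extension $K/F$.

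For (i)$\Leftrightarrow$(ii), I would first note that a character of $W_K$ is determined by its associated character of $K^\times$ under the Artin map, so $\theta^s=\theta$ holds if and only if $(\theta^s)^A=\theta^A$ as characters of $K^\times$. Using the stated identity $(\theta^s)^A=\theta^A\circ s$, this is equivalent to $\theta^A(s(x))=\theta^A(x)$ for every $x\in K^\times$, i.e.\ to $\theta^A(s(x)/x)=1$ for all $x$, which is precisely $s(x)/x\in\operatorname{Ker}(\theta^A)$. Here $s$ acts on $K^\times$ through the nontrivial element of $\Gal(K/F)$, so the quotient $s(x)/x$ makes sense and this step is essentially formal.

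For (ii)$\Leftrightarrow$(iii), the key observation is that, since $K/F$ is quadratic and hence cyclic, Hilbert's Theorem~90 identifies the kernel of $\operatorname{Nm_{K|F}}$ with the set $\{s(x)/x : x\in K^\times\}$. Thus condition~(ii) says exactly that $\theta^A$ is trivial on $\operatorname{Ker}(\operatorname{Nm_{K|F}})$. If $\theta^A=\chi\circ\operatorname{Nm_{K|F}}$ for some character $\chi$ of $F^\times$, then $\theta^A$ is clearly trivial on this kernel, giving (iii)$\Rightarrow$(ii). For the converse, triviality on the kernel means $\theta^A$ descends to a character of $K^\times/\operatorname{Ker}(\operatorname{Nm_{K|F}})\cong\operatorname{Nm_{K|F}}(K^\times)\subseteq F^\times$, and I then extend this character of the norm subgroup to all of $F^\times$.

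The step I expect to require the most care is this last extension, although it is standard: by local class field theory $\operatorname{Nm_{K|F}}(K^\times)$ is an open subgroup of finite index in $F^\times$, and since $\ma C^\times$ is divisible (hence injective as an abelian group) any continuous character of this open finite-index subgroup extends to a continuous character $\chi$ of $F^\times$, yielding $\theta^A=\chi\circ\operatorname{Nm_{K|F}}$ and hence (iii). Combining the two equivalences proves the lemma.
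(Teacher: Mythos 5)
Your argument is correct. Note that the paper does not actually prove this lemma: it simply cites \cite[Lemma 2.3.2]{dembélé2024galoisinertialtypeselliptic}, so there is no internal proof to compare against. Your write-up supplies the standard self-contained argument that the cited lemma encodes: the equivalence (i)$\Leftrightarrow$(ii) is the formal consequence of the compatibility $(\theta^s)^A=\theta^A\circ s$ recorded just before the lemma, and (ii)$\Leftrightarrow$(iii) is exactly Hilbert's Theorem~90 for the cyclic (quadratic) extension $K/F$, identifying $\ker(\operatorname{Nm}_{K|F})$ with $\{s(x)/x : x\in K^\times\}$. You are also right to flag the extension step in (ii)$\Rightarrow$(iii) as the only point needing care: by local class field theory $\operatorname{Nm}_{K|F}(K^\times)$ is an open subgroup of index $2$ in $F^\times$, and since $\ma C^\times$ is divisible the descended character extends to a character of $F^\times$ that remains continuous (continuity on an open subgroup propagates to each coset). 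One could quibble that you should note the induced bijection $K^\times/\ker(\operatorname{Nm}_{K|F})\to\operatorname{Nm}_{K|F}(K^\times)$ is a homeomorphism (open mapping theorem for locally compact, $\sigma$-compact groups), but this is routine and does not affect the validity of the proof.
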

\begin{proof}
    See~\cite[Lemma 2.3.2]{dembélé2024galoisinertialtypeselliptic}
\end{proof}

\begin{lemma}\label{cyclotomicdeterminant}
    Let $K/F$ be a quadratic extension and $\psi_K$ the associated quadratic character. Let $\delta$ be the determinant of the 2 dimensional representation of $W_F$ induced by a character $\theta:W_K\xrightarrow{}\ma C$. Then $\delta$ is unramified if and only if $\theta^{A}\vert_{\cl O_F^{\times}}=\psi_K^{A}\vert_{\cl O_F^{\times}}=\varepsilon_K^{A}$. In which case $\theta^{s}\vert_{I_K}=\theta^{-1}\vert_{I_K}$.
\end{lemma}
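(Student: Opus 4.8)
The plan is to compute the determinant character $\delta=\det\rho$ of $\rho=\Ind_{W_K}^{W_F}(\theta)$ explicitly, and then to translate the resulting identity of characters of $W_F$ into one of characters of $F^\times$ via the Artin map. Fixing a lift $s\in W_F$ of the nontrivial element of $\Gal(K/F)$ and the basis $\{e,\,se\}$ of the induced space, a direct matrix computation gives $\rho(g)=\operatorname{diag}(\theta(g),\theta^s(g))$ for $g\in W_K$, so that
\[
\delta|_{W_K}=\theta\cdot\theta^s,
\]
while $\rho(s)=\left(\begin{smallmatrix}0&\theta(s^2)\\1&0\end{smallmatrix}\right)$ yields $\delta(s)=-\theta(s^2)$. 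These two facts are precisely the statement of the standard determinant-of-induction formula for an index-two subgroup,
\[
\delta=\psi_K\cdot(\theta\circ\operatorname{Ver}),
\]
where $\operatorname{Ver}\colon W_F^{ab}\to W_K^{ab}$ is the transfer.

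Next I would apply Artin reciprocity. By the functoriality of local class field theory the transfer $\operatorname{Ver}$ corresponds to the inclusion $F^\times\hookrightarrow K^\times$, so that $(\theta\circ\operatorname{Ver})^A=\theta^A|_{F^\times}$. Combined with the displayed formula this gives the identity of characters of $F^\times$
\[
\delta^A=\psi_K^A\cdot\big(\theta^A|_{F^\times}\big).
\]
Since $\delta$ is unramified if and only if $\delta^A|_{\cl O_F^\times}$ is trivial, and since $\psi_K$ is quadratic (so $\psi_K^A=(\psi_K^A)^{-1}$), restricting the last identity to $\cl O_F^\times$ yields $\delta$ unramified $\iff\theta^A|_{\cl O_F^\times}=\psi_K^A|_{\cl O_F^\times}$, which is the first assertion. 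The advantage of routing through the transfer rather than the norm map $\operatorname{Nm}_{K/F}$ is uniformity: the norm surjects onto $\cl O_F^\times$ only in the unramified case, so a norm-based argument would require extra bookkeeping on the nontrivial coset in the ramified case.

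For the second assertion, note $I_K=I_F\cap W_K\subseteq I_F$, so if $\delta$ is unramified then $\delta|_{I_K}$ is trivial. Restricting $\delta|_{W_K}=\theta\cdot\theta^s$ to $I_K$ then gives $\theta|_{I_K}\cdot\theta^s|_{I_K}=1$, that is $\theta^s|_{I_K}=\theta^{-1}|_{I_K}$, as claimed.

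The routine inputs here are the explicit matrices for $\rho$ and the fact that $\psi_K^A$ is trivial on $\cl O_F^\times$ exactly when $K/F$ is unramified. The main obstacle is to invoke the determinant-of-induction formula and the transfer–inclusion compatibility of the reciprocity maps with the correct normalizations: one must fix the conjugation convention for $\theta^s$ so that it matches the recorded identity $(\theta^s)^A=\theta^A\circ s$, and one must track the sign character produced by the permutation action so that $\psi_K$ (and not the trivial character) appears in the determinant.
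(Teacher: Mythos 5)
Your proof is correct. Note that the paper itself does not reprove this lemma: it simply points to \cite[Lemma~2.3.7]{dembélé2024galoisinertialtypeselliptic} and observes that the proof there already yields the stated equivalence. Your argument is the standard self-contained one and each step checks out: the matrices $\rho(g)=\operatorname{diag}(\theta(g),\theta^s(g))$ for $g\in W_K$ and $\rho(s)=\left(\begin{smallmatrix}0&\theta(s^2)\\1&0\end{smallmatrix}\right)$ give $\delta|_{W_K}=\theta\cdot\theta^s$ and $\delta(s)=-\theta(s^2)$, which is exactly $\delta=\psi_K\cdot(\theta\circ\operatorname{Ver})$; the compatibility of the reciprocity maps with the transfer (transfer on Weil groups corresponds to the inclusion $F^\times\hookrightarrow K^\times$) converts this to $\delta^A=\psi_K^A\cdot(\theta^A|_{F^\times})$, and restricting to $\cl O_F^\times$ and using that $\psi_K$ is quadratic gives the first assertion; the second assertion follows by restricting $\delta|_{W_K}=\theta\cdot\theta^s$ to $I_K\subseteq I_F$. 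Your closing remark about conventions is apt: the only real pitfalls are the sign character $\psi_K$ coming from the permutation action and matching the conjugation convention with $(\theta^s)^A=\theta^A\circ s$, both of which you handle. One could equally well phrase the first step without the transfer, by evaluating $\delta^A$ directly on $\cl O_F^\times$ via its image in $W_F^{ab}$, but your route is cleaner and works uniformly in the ramified and unramified cases.
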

\begin{proof}
    In~\cite[Lemma 2.3.7]{dembélé2024galoisinertialtypeselliptic} the result appears as an implication but the proof shows that it is and equivalence.
\end{proof}

\begin{corollary}\label{cor: the inductions are really irreducible}
    Let $K/F$ be a quadratic extension, $\theta$ a character of~$W_K$ and $\rho := \Ind_{W_K}^{W_F} \theta$.
    Assume that $\det~\rho$ is unramified. If $\theta|_{I_K}$ has order $\geq 3$, then $\rho$ is irreducible. Moreover if $K/F$ is ramified then $\rho\vert_{I_F}$ is also irreducible.
\end{corollary}
\begin{proof} Let $\chi := \theta|_{I_K}$. From Lemma~\ref{cyclotomicdeterminant} we have $\chi^s = \chi^{-1}$.
    Since $\chi$ has order $> 2$, we have $\chi^{-1} \neq \chi$ and hence $\theta^s \neq \theta$, thus $\rho=\Ind_{W_K}^{W_F} \theta$ is irreducible. If $K/F$ is ramified the same reasoning applies to $\rho\vert_{I_F}=\Ind_{I_K}^{I_F} \chi$.
\end{proof}

Since the determinant of a representation coming from an elliptic curve is
$\omega$ which is unramified, the characters inducing the inertial type must
satisfy $\theta^A\vert_{\cl O_F^{\times}}=\varepsilon_K^{A}$. The following
lemma shows that this condition can be reduce to a finite number of
computations.

\subsection{Exceptional (supercuspidal) representations}
\label{sec:exceptional}

The supercuspidal representations that are not induced from a character are
exceptional and their projective image is $A_4$ or $S_4$. Since exceptional
representations only occur for $p=2$, we assume in this section that $p=2$.

\begin{proposition}\label{triply imprimitive}
    Let $\rho$ be a supercuspidal representation and let $\ger J(\rho)$ be the group of characters $\theta$ such that $\rho\otimes\theta\simeq\rho$. Then $\#\ger J(\rho)\in\{1,2,4\}$ and the following statements hold:
    \begin{enumerate}[(i)]
        \item  $\# \ger J(\rho)=1 \iff \rho$ is exceptional.
        \item  $\# \ger J(\rho)=2\iff$ There is a unique quadratic extension $K/F$ such that $\rho=\Ind_{W_K}^{W_F} \theta $.
        \item  $\# \ger J(\rho)=4\iff$ the projective image of $\rho$ is $D_2\simeq C_2\times C_2$  $\iff\rho=\Ind_{W_K}^{W_F} \theta $ can be induced from $3$ distinct quadratic extensions$\iff$ the character $\theta^{s}/\theta$ factors through the norm map.
    \end{enumerate}

    Moreover if $\# \ger J(\rho)=4$ then $\ger J(\rho)$ is the group formed by the
    trivial character and the $3$ characters $\psi_K$ associated to the $3$
    quadratic extensions $K$ of $F$ from which $\rho$ is induced.
\end{proposition}
\begin{proof}
    See~\cite[41.3 Corollary]{Bushnell2006} and~\cite[2.7 Proposition]{Gerardin}.
\end{proof}

According to this proposition, we call the representations $\rho$ with $\# \ger
    J(\rho)=2$ {\it simply imprimitive} and those with $\# \ger J(\rho)=4$ we call
them {\it triply imprimitive}.

\begin{proposition}\label{P: base change of exceptional types}
    For $L/F$ a finite extension and $\rho$ an exceptional supercuspidal representation, we define
    $\rho_L:=\rho\vert_{W_L}$. Then,

    \begin{itemize}
        \item There exist a cubic extension $L/F$ such that $\rho_L$ is non exceptional.
        \item If $L/F$ is cubic and Galois, then $\rho_L$ is triply imprimitive and the
              nontrivial elements of $\ger J(\rho_L)$ are permuted transitively by
              $\Gal(L/F)$.
        \item If $L/F$ is cubic and non-Galois, let $\tilde{L}/F$ be the normal closure of
              $L/F$ and $E/F$ the maximal unramified sub-extension of $\tilde{L}/F$. Then,
              $\rho_L$ is simply imprimitive, $\rho_{\tilde{L}}$ is triply imprimitive and
              $\rho_E$ is primitive; moreover, the nontrivial elements of $\ger
                  J(\rho_{\tilde{L}})$ are permuted transitively by $\Gal(\tilde{L}/F)$.
    \end{itemize}
\end{proposition}
\begin{proof}
    See \cite[\S 42.2, Theorem and Remark]{Bushnell2006}.
\end{proof}

\subsection{Inertial types}
\label{sec:typesCurves}

A {\it Weil--Deligne inertial type} is an equivalence class $[\rho,N]$ of
WD-representations under the equivalence relation $(\rho,N)\sim(\rho',N')$ if
there exist $P\in \GL_2(\ma C)$ such that $P\rho(g)P^{-1}=\rho'(g)$ for all
$g\in I_F$ and $PNP^{-1}=N'$. Thus one can think of inertial types simply as
WD-representations of $I_F$. Since $N=0$ except for Steinberg representations,
that are classified in Lemma~\ref{Pot Mult Uniform}, we will denote the
equivalence class $[\rho,N]$ simply by~$\tau :=\rho\vert_{I_K}$.

\begin{defn}
    Let $\tau:=\rho\vert_{I_F}$ be an inertial type. We call $\tau$ principal series (respectively Steinberg) inertial type if $\rho$ a principal series (respectively Steinberg) Weil-Deligne representation. We call $\tau$ a (un)ramified supercuspidal inertial type if $\rho$ is a Weil-Deligne representation that is induced from a (un)ramified quadratic extension $K/F$. Finally we call $\tau$ an exceptional inertial type if $\rho$ is an exceptional Weil-Deligne representation.
\end{defn}

Considering the restrictions to inertia of the different types of Weil--Deligne
representations and imposing that $\det \tau =1$, we obtain that any
2-dimensional inertial type with trivial determinant is of one of the
following:

\begin{enumerate}[(i)]
    \item If $\tau_E=PS(\theta_1,\theta_2)\vert_{I_F}$ then
          $\theta_1\vert_{I_F}=\theta_{2}^{-1}\vert_{I_F}$ and, letting
          $\chi:=\theta_1\vert_{I_K}$, we have
          \begin{equation}\label{m for PS}
              \tau_E =\chi\oplus\chi^{-1} \quad \text{and} \quad m(\tau_E)=2m(\chi).
          \end{equation}
    \item If $\tau_E=\operatorname{St}(\theta)\vert_{I_F}$ and $\chi:=\theta\vert_{I_F}$
          then $\chi=\chi^{-1}$, thus $\chi$ is at most quadratic,
          \begin{equation}\label{m for St}
              \tau_E =\chi\oplus\chi  \quad \text{and} \quad m(\tau_E)=2m(\chi).
          \end{equation}

    \item If $\tau_E=(\Ind_{W_K}^{W_F} \theta)\vert_{I_F}$ and $\chi:=\theta\vert_{I_F}$
          then $\chi^{s}=\chi^{-1}$ and $\chi^{A}\vert_{\cl
              O_F^{\times}}=\varepsilon_K^{A}$. Moreover,
          \begin{equation}\label{m for SCU}
              \tau_E=\chi\oplus\chi^{-1} \quad \text{and} \quad m(\tau_E)=2m(\chi)~\text{if}~K/F~\text{is unramified},
          \end{equation}
          whilst
          \begin{equation}\label{m for SCR}
              \tau_E=\Ind_{I_K}^{I_F} \chi \quad \text{and} \quad m(\tau_E)=m(\chi)+m(\varepsilon_K)~\text{if}~K/F~\text{is ramified}.
          \end{equation}
    \item $\tau$ is the restriction of an exceptional supercuspidal representation (only occurs for $p=2$).
\end{enumerate}

Note that a single character of inertia $\chi$ is enough to determine $\tau$ in
all cases. This motivates the following definitions:

\begin{defn}\label{D: Associated inertia characters}
    Let $\tau$ be a principal series, Steinberg, ramified supercuspidal or unramified supercuspidal inertial type with trivial determinant. We say that $\chi$ is an inertia character associated to $\tau$ if it satisfies the equation~\eqref{m for PS}--\eqref{m for SCR}, respectively. For an exceptional inertial type $\tau$, we say that $\chi$ is an inertia character associated to $\tau$ if $\chi$ is associated to $\tau_L$ where $L$ is the minimal extension over which $\tau$ becomes triply imprimitive (see Proposition~\ref{P: base change of exceptional types}).
\end{defn}

\begin{remark}
    There are various characters associated to an inertial type. For instance, if $\tau$ is a principal series and $\chi$ is an associated inertia character so is $\chi^{-1}$. Also, if $\tau$ is triply imprimitive there are three extensions $K_i$ each one with two associated inertia characters $\chi_i$ and $\chi_i^{s_i}=\chi_i^{-1}$ so there is a total of $6$ associated inertia characters associated to~$\tau$.
\end{remark}

\subsection{Additional notation:}\label{Notation 2} When dealing with inertial types with unramified determinant, most of the time
we only care about $\theta\vert_{I_F}$ and not about $\theta$. To avoid
confusion, from now on we fix for the remain of the paper: the letters $\rho$
and $\theta$ will be reserved for representations and characters of $W_F$ while
$\tau:=\rho\vert_{I_F}$ and $\chi:=\theta\vert_{I_F}$ will be used for the
respective restrictions to inertia. Moreover, for a representation $\rho$ of
$W_F$ and an extension $L/F$ we set $\rho_L:=\rho\vert_{W_L}$ and
$\tau_L:=\rho_L\vert_{I_L}$.

\subsection{Elliptic curves}

Let $E/F$ be an elliptic with conductor~$N_E$. For each prime $\ell$, the
action of $\Gal(\overline{F}/F)$ on the $\ell^n$-torsion points of $E$ for all
$n>0$ induces an action of $\Gal(\overline{F}/F)$ on the Tate module
$T_{\ell}(E)=\varprojlim E[\ell^n]\simeq\ma Z_{\ell}^{2}$. After tensoring with
$\ma Q_{\ell}$ and choosing a basis for~$T_{\ell}(E)$ we get a representation
$\rho_{E,\ell}:\Gal(\overline{F}/F)\xrightarrow{}\GL_2(\ma Q_{\ell})$. When
$\ell\neq p$ one can construct from $\rho_{E,\ell}$ a Weil--Deligne
representation $(\rho_E,N)$, which is independent of the prime $\ell$. We
define the inertial type of $E$ to be $\tau_E=\rho_E\vert_{I_K}$.

It is well known that for an elliptic curve $E/F$ with potentially good
reduction the field $L:=F^{un}(E[m])$ is the same for all $m\in \ma Z_{\geq3}$
coprime to $p$. Moreover, $L$ is the fixed field of $\ker \tau_E$ and it is
called the {\it inertial field of $E$}. The order~$e_E$ of the group
$\Phi:=\Gal(L/F^{un})$ is the {\it semistability defect} of~$E$; the following
is standard (see~\cite{Kraus1990}).

\begin{theorem}\label{semistability groups}
    Let $E/F$ be an elliptic curve, and $L$ its inertial field. Then exactly one of the following holds

    \begin{enumerate}[(i)]
        \item $\Phi$ is cyclic of order $2,3,4,6$,
        \item $p=3$ and $\Phi\simeq C_3\rtimes C_4$ of order $12$,
        \item $p=2$ and $\Phi\simeq Q_8$ the quaternion group of order $8$, or
        \item $p=2$ and $\Phi\simeq \SL_2(\ma F_3)$ of order $24$.
    \end{enumerate}
\end{theorem}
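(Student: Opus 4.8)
The plan is to embed $\Phi$ into the geometric automorphism group of the elliptic curve obtained from $E$ by good reduction over its inertial field, and then to read off the possible groups from the classification of such automorphism groups. Throughout I take $E$ to have additive potentially good reduction, so that $\Phi \neq 1$ and the inertial field $L$ is defined as in Theorem~\ref{Fixed field of Ker}.

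Fix $m \geq 3$ coprime to $p$. By Theorem~\ref{Fixed field of Ker}, $L = F^{un}(E[m])$ is the fixed field of $\ker \tau_E$, so $\Phi = \Gal(L/F^{un})$ acts \emph{faithfully} on $E[m]$; this is precisely the N\'eron--Ogg--Shafarevich criterion. Since $F^{un}$ has algebraically closed residue field $\overline{\F}_p$, the extension $L/F^{un}$ is totally ramified, and in particular $\Phi$ acts trivially on residue fields.

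The main step is to show that $\Phi$ acts on $E[m]$ through honest automorphisms of the reduced curve. Over $\mathcal{O}_L$ the curve $E$ has good reduction, so its N\'eron model $\mathcal{E}$ is an abelian scheme with special fibre an elliptic curve $\Ebar / \overline{\F}_p$. By functoriality the $\Phi$-action on $\Spec \mathcal{O}_L$ lifts to $\mathcal{E}$, and since $\Phi$ fixes the residue field and preserves the (canonical) identity section, each $\sigma \in \Phi$ induces a geometric automorphism of $\Ebar$ fixing the origin; this yields a homomorphism $\Phi \to \Aut(\Ebar)$. Because the reduction map $E[m] \to \Ebar[m]$ is a $\Phi$-equivariant isomorphism, the faithful action of $\Phi$ on $E[m]$ factors as $\Phi \to \Aut(\Ebar) \to \GL(\Ebar[m])$, so $\Phi \hookrightarrow \Aut(\Ebar)$. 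This descent to the special fibre is the crux of the proof; the rest is group theory.

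It remains to list the nontrivial subgroups of $\Aut(\Ebar)$ over the algebraically closed field $\overline{\F}_p$, using the classical classification (see \cite{Silverman1994}). For $p \geq 5$, $\Aut(\Ebar)$ is cyclic of order $2$, $4$ (when $j = 1728$) or $6$ (when $j = 0$), so $\Phi$ is cyclic of order $2,3,4$ or $6$, which is~(i). For $p = 3$, curves with $j \neq 0$ have $\Aut(\Ebar) = \{\pm 1\}$, while the supersingular curve $j = 0 = 1728$ has $\Aut(\Ebar) \cong \ma Z/3 \rtimes \ma Z/4$ of order $12$, whose nontrivial subgroups are cyclic of order $2,3,4,6$ together with the full group; this gives~(i) and~(ii). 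For $p = 2$, curves with $j \neq 0$ again have $\Aut(\Ebar) = \{\pm 1\}$, while the supersingular curve has $\Aut(\Ebar) \cong \SL_2(\ma F_3)$ of order $24$; a direct count shows its nontrivial subgroups are cyclic of order $2,3,4,6$, the normal Sylow-$2$ subgroup $Q_8$, and the whole group---there is no subgroup of order $12$, since $\SL_2(\ma F_3)$ has abelianization $\ma Z/3$ and hence no index-$2$ subgroup. This yields~(i), (iii) and~(iv), and the four cases are mutually exclusive by their orders and isomorphism types. The main obstacle is the special-fibre descent $\Phi \hookrightarrow \Aut(\Ebar)$; everything after it is routine, and this is essentially the argument of \cite{Kraus1990}.
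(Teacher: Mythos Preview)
Your proof is correct and is precisely the standard argument the paper cites (without reproducing) as~\cite{Kraus1990}: embed $\Phi \hookrightarrow \Aut(\Ebar)$ via the special fibre of the N\'eron model over $\mathcal{O}_L$, then read off the subgroup list from the classification of automorphism groups of elliptic curves over~$\overline{\F}_p$. The paper gives no proof of its own, so there is nothing further to compare.
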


\section{Some properties
  of types and their characters}\label{S: section 3}

In this section, we present some results that are used repeatedly in the following sections. Of special relevance, both for its utility and as a result on its own is Theorem~\ref{Inertia field determine the type}.

\begin{lemma}\label{L: Q8 has only one rep}
    Let $G$ be a group and $\rho_1, \rho_2 : G\xrightarrow{} \GL_2(\ma C)$ be two representations. Suppose that
    $\Image\rho_i\simeq Q_8$. Then $\rho_1\simeq\rho_2$ if and only if $\ker\rho_1=\ker\rho_2$. Moreover, the $\rho_i$ are irreducible.
\end{lemma}
\begin{proof}
    It is clear that $\rho_1\simeq\rho_2$ implies $\ker\rho_1=\ker\rho_2$. For the converse, notice that if $H:=\ker\rho_1=\ker\rho_2$ then both $\rho_i$ factor as faithful representations of the same group $G/H\simeq Q_8$. Since there is just one complex faithful representation of $Q_8$, it follows that $\rho_1\simeq\rho_2$. Moreover, this unique representation is irreducible.
\end{proof}

We denote by \href{https://www.lmfdb.org/Groups/Abstract/48.28}{$2O$} the
binary octahedral group, more precisely, $2O$ is the {\tt SmallGroup(48,28)}.
Recall that $F/\Q_p$ is a finite extension.

The following two lemmas are new
\begin{lemma}\label{L: if SL23 coincide on Q8 are equal}
    Let $L/F$ satisfy $I_F/I_L\simeq C_3$ and $\tau':I_L\xrightarrow{}\GL_2(\ma C)$ be an irreducible inertial type with $\det\tau'=1$. Then, there is at most one inertial type $\tau:I_F\xrightarrow{}\GL_2(\ma C)$ such that $\det\tau=1$ and $\tau\vert_{I_L}=\tau'$.
\end{lemma}
\begin{proof}
    Let $\tau_1$ and $\tau_2$ be two inertial types such that $\det\tau_i=1$ and $\tau_i\vert_{I_L}=\tau'$. Since $I_F/I_L\simeq C_3$, by Gallagher's Theorem~\cite[Corollary 6.17]{Isaacs1976}, $\tau_1\simeq\tau_2\otimes\delta$ for $\delta$ a character of $I_F$ with kernel~$I_L$. So, $\delta$ has order dividing~3, and taking determinants in the previous isomorphism yields $\det\tau_1\simeq\det\tau_2\tenso\delta^2$. Thus $\delta = 1$ and the result follows.
\end{proof}

\begin{lemma}\label{L:Exceptionals have unramified det}
    Let $G\in\{\GL_2(\F_3),2O\}$. Let $\rho:W_F\xrightarrow{}\GL_2(\F_3)$ be a representation with inertial type~$\tau$ satisfying $\Image\rho\simeq G$
    and $\Image\tau\simeq\SL_2(\F_3)$. Then $\det\tau=1$.
\end{lemma}
\begin{proof}
    Let $N$ be the field cut out by $\rho$. Then $\Gal(N/F)\simeq G$. The group $G$ admits a unique nontrivial cyclic quotient which is $G/\SL_2(\F_3)\simeq C_2$. The character $\det\rho$ cuts a cyclic subextension of $N/F$, so it must be either $F$ itself or the fixed field of $\SL_2(\F_3)$. In both cases, $\det\rho$ is unramified or equivalently $\det\tau=1$.
\end{proof}

The following theorem shows that an inertial type whose image is compatible
with that of a type arising from an elliptic curve is completely determined by
its inertial field.

\begin{theorem}\label{Inertia field determine the type}
    Let $\tau_1$ and $\tau_2$ be inertial types defined over $F$ with trivial determinant and image isomorphic to one of the groups in Theorem~\ref{semistability groups}. Let $N_i$ be the field fixed by $\ker \tau_i$. Then $N_1=N_2\iff\tau_1\simeq\tau_2$.
\end{theorem}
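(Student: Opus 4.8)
The plan is to dispatch the forward implication immediately and to reduce the reverse implication to a purely representation-theoretic uniqueness statement about the finite group cut out by the type. If $\tau_1\simeq\tau_2$, then they are conjugate and hence have the same kernel, so $L_1=L_2$ for free. The content is therefore the converse. First I would observe that each $\tau_i$ is a representation of $I_F=\Gal(\overline{F}/F^{un})$, so by definition $\ker\tau_i=\Gal(\overline{F}/L_i)$, and the hypothesis $L_1=L_2=:L$ gives $\ker\tau_1=\Gal(\overline{F}/L)=\ker\tau_2$. Consequently both $\tau_1$ and $\tau_2$ descend to faithful two-dimensional representations $\bar\tau_1,\bar\tau_2$ of one and the same finite group $\Phi:=\Gal(L/F^{un})=I_F/\ker\tau_i$, which by the hypothesis on the image is one of the groups in Theorem~\ref{semistability groups}; moreover both have trivial determinant, so they are faithful homomorphisms $\Phi\to\SL_2(\ma C)$. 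It then suffices to show that, for each such $\Phi$, there is a unique isomorphism class of faithful two-dimensional representations of trivial determinant, for then $\bar\tau_1\simeq\bar\tau_2$ and hence $\tau_1\simeq\tau_2$.

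For $\Phi$ cyclic of order $n\in\{2,3,4,6\}$ I would argue as follows. Since $\Phi$ is abelian, any two-dimensional representation of trivial determinant splits as $\chi\oplus\chi^{-1}$ for a character $\chi$ of $\Phi$, and faithfulness is equivalent to $\chi$ having order $n$. The isomorphism class depends only on the unordered pair $\{\chi,\chi^{-1}\}$, and since $\varphi(n)\leq 2$ for $n\in\{2,3,4,6\}$, the faithful characters of $\Phi$ form exactly this single pair. Hence there is precisely one faithful representation of trivial determinant up to isomorphism, as required.

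The remaining cases $\Phi\in\{Q_8,\ \ma Z/3\rtimes\ma Z/4,\ \SL_2(\F_3)\}$ are nonabelian. Here I would first note that a faithful two-dimensional complex representation must be irreducible, as a reducible one would be a sum of two characters and hence have abelian image. It then remains to read off from the character table of each group that exactly one irreducible two-dimensional representation is simultaneously faithful and of trivial determinant. For $Q_8$ this is immediate: it has a unique two-dimensional irreducible, namely the faithful embedding into $\SL_2(\ma C)$. For $\ma Z/3\rtimes\ma Z/4\simeq\Dic_3$ there are two such irreducibles, one of which factors through the quotient $\Dic_3\to S_3$ by the center and is not faithful, while the other is the faithful embedding of trivial determinant. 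The main obstacle is $\SL_2(\F_3)$, where all three two-dimensional irreducibles are faithful: a non-faithful one would factor through $\SL_2(\F_3)/\{\pm I\}\simeq A_4$, which has no two-dimensional irreducible representation. The determinant is precisely what separates them, for writing the three as $\sigma,\ \sigma\otimes\chi,\ \sigma\otimes\chi^2$ with $\sigma$ the standard embedding and $\chi$ a generator of the character group of the abelianization $C_3$, their determinants are $1,\ \chi^2,\ \chi$, so only $\sigma$ has trivial determinant. This pins down a unique faithful representation of trivial determinant in every case and completes the argument.
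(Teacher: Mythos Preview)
Your proof is correct and takes a genuinely different, more uniform route than the paper. The paper argues case by case according to the classification of inertial types: in the abelian case it writes $\tau_i=\chi_i\oplus\chi_i^{-1}$ and compares kernels of the characters; for $p=3$ and $\Phi\simeq\ma Z/3\rtimes\ma Z/4$ it realizes each $\tau_i$ as an induction from the unique quadratic ramified extension of $F^{un}$ and again compares the inducing characters; for $Q_8$ it invokes the uniqueness of the two-dimensional irreducible; and for $\SL_2(\F_3)$ it base-changes to the cubic subfield $M$ cut out by the $Q_8$ subgroup, uses the $Q_8$ case to get $\tau_1|_{I_M}\simeq\tau_2|_{I_M}$, and then argues via Clifford theory that $\tau_1\simeq\tau_2\otimes\eta$ for a cubic character $\eta$, which the trivial-determinant hypothesis forces to be trivial. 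Your argument instead bypasses the inertial-type classification entirely and reduces everything to a single representation-theoretic lemma: each group $\Phi$ on the list admits a unique faithful two-dimensional representation with trivial determinant. Your treatment of $\SL_2(\F_3)$ via the character table (all three two-dimensional irreducibles are faithful because $A_4$ has none, and the determinant separates them as $1,\chi,\chi^2$) is more direct than the paper's base-change-and-twist manoeuvre, though the latter has the virtue of not requiring the full character table. Both approaches ultimately hinge on the trivial-determinant hypothesis in the $\SL_2(\F_3)$ case, just packaged differently.
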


\begin{proof}

    Clearly, $\tau_1\simeq\tau_2\implies N_1=N_2$. For the converse, let $N' :=
        N_1=N_2$. Observe that this field is also the field fixed by the kernel of the
    associated inertia characters.

    Suppose first that $N'/F^{un}$ is abelian. Then, $\tau_i =
        \chi_i\oplus\chi_i^{-1}$ where $\chi_i$ is a character of $I_F$ if~$\tau_i$ is
    a principal series or $\chi_i$ is a character of $I_K=I_F$ if $\tau_i$ is
    unramified supercuspidal, where $K/F$ is the quadratic unramified extension.
    From $N_1=N_2$, we get $\ker\chi_1=\ker\chi_2$, and so both~$\chi_i$ factor
    through~$I_F/\ker\chi_i\simeq \Image\chi_i\simeq C_e$ and have image~$C_e$; so
    they are injective characters of $\Gal(N'/F^{un})=C_e$ with $e \in
        \{2,3,4,6\}$. Thus the image of a generator $g$ must be
    $\chi_i(g)=\zeta_e^{\pm1}$ and it follows that $\chi_1=\chi_2$ or
    $\chi_1=\chi_2^{-1}$ giving $\tau_1\simeq\tau_2$ in both cases.

    Suppose now that $N'/F^{un}$ is non-abelian and $p=3$. Then
    $\Gal(N'/F^{un})=C_3\rtimes C_4$ and $\tau_i=\Ind_{I_K}^{I_F} \chi_i$ for
    $\chi_i$ characters of $I_K=\Gal(\overline{K}/K^{un})$, where $K/F$ is
    quadratic ramified. The field fixed by~$\ker\tau_i$ is equal to the field fixed
    by~$\ker \chi_i$. Since $K^{un}$ is the unique quadratic ramified extension of
    $F^{un}$, it follows that $\chi_i$ are injective characters of
    $\Gal(N'/K^{un})\simeq C_6$ and, by the same argument as before,
    $\chi_1=\chi_2$ or $\chi_2^s$ thus $\tau_1\simeq\tau_2$.

    Finally, assume that $N'/F^{un}$ is non-abelian and $p=2$. Then
    $\Gal(N'/F^{un})=Q_8$ or $\SL_2(\ma F_3)$. The former case follows from
    Lemma~\ref{L: Q8 has only one rep}. For the latter, notice that if $L$ is the
    unique cubic (tame) extension of $F^{un}$, then $\ker \tau_1\vert_{I_L} =\ker
        \tau_2\vert_{I_L} $ and $\Image \tau_i\vert_{I_L} \simeq Q_8$. By the previous
    case, we have $\tau_1\vert_{I_L}\simeq\tau_2\vert_{I_L}$ and the result follows
    from Lemma~\ref{L: if SL23 coincide on Q8 are equal}.
\end{proof}

\begin{lemma}\label{L: Inductions have image Q8}
    Let $G$ be a group, $H\leq G$ an index 2 subgroup, and consider the irreducible induction $\rho:=\Ind_H^G \theta$ where $\theta$ is a character of order 4 or 6. Assume that $\rho$ has trivial determinant. Then $\Image\rho$ is equal to the quaternion group~$Q_8$ if and only if $\theta$ has order 4 and $\Image\rho\simeq C_3\rtimes C_4$ if and only if $\theta$ has order 6.
\end{lemma}

\begin{proof}
    Since $\rho$ has trivial determinant its image is a finite subgroup of $\SL_2(\ma C)$ whose classification is well known. There are two infinite families: the cyclic groups $C_n$ and the binary dihedral or dicyclic groups $\operatorname{Dic}_n$ of order $4n$ and three exceptional groups, of order $24,48$ and $120$. Since the image is non-abelian of order 8 or 12 the only possibilities are $\operatorname{Dic}_2\simeq Q_8$ and $\operatorname{Dic}_3\simeq C_3\rtimes C_4$
\end{proof}

\begin{lemma}\label{L: Elliptitc Curve SCR conditions for chi}
    Let $\tau=\Ind_{I_K}^{I_F}\chi$ be a ramified supercuspidal inertial type. Then, $\tau$ has trivial determinant and $\Image\tau=Q_8$ (respectively $\Image\tau=C_3\rtimes C_4$) if and only if $\chi^A\vert_{\cl O_F^{\times}}=\varepsilon_K^{A}$ and the order of $\chi$ is $4$ (respectively 6)
\end{lemma}
\begin{proof}
    This follows directly form Lemma~\ref{cyclotomicdeterminant} and Lemma~\ref{L: Inductions have image Q8}.
\end{proof}

\begin{lemma}\label{L: explicit tiply conditions}
    Let $\tau=\Ind_{I_K}^{I_F}\chi$ be a ramified supercuspidal inertial type and let $\pi\in K$ be a uniformizer. Assume that $\chi$ has order $4$ and $\chi\vert_{\cl O_F^{\times}}=\varepsilon_K^{A}$. Then, the type $\tau$ is triply imprimitive if and only if $\chi(\pi/s(\pi))=\pm 1$ and it is simply imprimitive if and only if $\chi(\pi/s(\pi))=\pm i$.
\end{lemma}
\begin{proof}
    Let $\theta$ be any character of $K^{\times}$ such that $\theta\vert_{\cl O_K^{\times}}=\chi$. By Proposition~\ref{triply imprimitive}, we have that $\rho=\Ind_{W_K}^{W_F}\theta$ is triply imprimitive if and only if $\theta/\theta^s$ factors through the norm map, and simply imprimitive otherwise. By Lemma~\ref{Factor through the norm}, this is equivalent to $(\theta/\theta^s)^s=\theta/\theta^s$, or equivalently, $\theta/\theta^s$ having order dividing $2$. By Lemma~\ref{cyclotomicdeterminant}, $\chi^s=\chi^{-1}$, so $(\theta/\theta^s)\vert_{\cl O_K^{\times}}=\chi^2$. Since $\chi$ has order $4$ then $\theta/\theta^s$ is quadratic on $\cl O_K^{\times}$. Thus it will be quadratic on $K^{\times}$ if and only if $(\theta^s/\theta)(\pi)=\pm1$ for any choice of uniformizer $\pi$. Since $\pi/s(\pi)$ is a unit, we have $(\theta/\theta^s)(\pi)=\frac{\theta(\pi)}{\theta(s(\pi))}=\theta(\frac{\pi}{s(\pi)})=\chi(\pi/s(\pi))$ and the result follows.
\end{proof}

\section{Uniform Cases}\label{S: Uniform cases}

In this section, we will present the cases that can be treated uniformly.

\begin{lemma}\label{L: Quadratic Twists of Ell Curves} Let $E/F$ be an elliptic curve with semistability defect $e \in \{2,6\}$.
    If $e=2$, then~$E$ has a quadratic twist $E_d$ with good reduction, and if $e=6$ then it has a quadratic twist $E_d$ with semistability defect $3$.
\end{lemma}
\begin{proof}
    For $p\geq 3$ it is well known. For $p=2$ see~\cite[Lemma 4.2]{NirvannaWildCyclic}.
\end{proof}

Before showing the classification of types we need to define appropriate labels
for them.

\begin{defn}\label{Labels of types}
    We define $\tau_{sc}(u,m,e)$ to be a supercuspidal type induced from
    $K=F(\sqrt{u})$ by a character $\theta$ of $W_K$ with $m(\theta)=m$ and restriction to~$I_K$ of order~$e$. Similarly, we denote by $\tau_{ps}(1,m,e)$ a principal series type associated to a character  $\theta$ of $W_F$ with $m(\theta)=m$ and restriction to~$I_F$ of order~$e$. Finally, $\tau_{St}$ denotes the inertial type corresponding to~$[St(1),N]$.
\end{defn}

\begin{lemma}\label{Pot Good Uniform}
    Let $E/F$ be an elliptic curve with semistability defect $e\leq 2$. Then,
    \begin{enumerate}[(i)]
        \item If $e=1$, that is, $E$ has good reduction, then $v(N_E)=0$ and
              $\tau_E=1\oplus1$;
        \item If $p\geq 3$ and $e=2$, then $v(N_E)=2$ and $\tau_E=\varepsilon_K\oplus
                  \varepsilon_K$ where $K$ is any of the two quadratic ramified extensions of
              $F$;
        \item If $p=2$ and $e=2$, then $v(N_E)=2m(\varepsilon_K)$ and
              $\tau_E=\varepsilon_K\oplus\varepsilon_K$ for some quadratic~$K/F$.
    \end{enumerate}
\end{lemma}

\begin{proof}
    The proof is analogous to the proof of~\cite[Lemma 3.2.4]{dembélé2024galoisinertialtypeselliptic}
\end{proof}

Recall that $\tau_{St}$ denotes the equivalence class $[St(1),\left(\begin{smallmatrix} 0&1\\0&0\end{smallmatrix}\right)]$ and we will denote by $\varepsilon_i\tenso\tau_{St}$ the equivalent class $[St(\varepsilon_i),\left(\begin{smallmatrix} 0&1\\0&0\end{smallmatrix}\right)]$.

\begin{lemma}\label{Pot Mult Uniform}
    Let $E/F$ be an elliptic curve $v(j_E) < 0$, that is, with potentially multiplicative reduction. Then,
    \begin{enumerate}[(i)]
        \item If $E$ has multiplicative reduction, then $N_E=\p$ and
              $\tau_E=\tau_{\operatorname{St}}$.
        \item If $p\geq 3$ and $E$ has additive potentially multiplicative reduction, then
              $N_E=\p^2$ and $\tau_E=\varepsilon_K\tenso\tau_{\operatorname{St}}$ for any of
              the two quadratic unramified extensions $K/F$.
        \item If $p=2$ and $E$ has additive potentially multiplicative reduction, then
              $\nu(N_E)=2m(\varepsilon_K)$ and $\tau_E=\varepsilon_K \tenso
                  \tau_{\operatorname{St}}$ for some quadratic extension $K/F$.
    \end{enumerate}
\end{lemma}
\begin{proof}
    The proof is analogous to the proof of~\cite[4.1.1]{dembélé2024galoisinertialtypeselliptic}
\end{proof}

\subsection{Inertia types for \texorpdfstring{$p \geq 5$}{}}

Recall that $F/\Q_p$ is a finite extension and assume $p \geq 5$ for this
section. We will classify the inertial types $\tau_E$ of elliptic curves $E/F$.
The cases of potentially multiplicative reduction and potentially good
reduction with $e\leq2$ were treated in the previous section, therefore it
remains to study the case of curves with potentially good reduction and
$e\in\{3,4,6\}$.

Let $u\in \cl O_F^{\times}$ generate $(\calO_F/\fp)^\times \simeq
    \F_q^{\times}$.

\begin{lemma}\label{Pot Good p=5}
    Let $E/F$ be an elliptic curve with potentially good reduction and semistability defect $e\geq 3$.
    We have
    \[\tau_E=\begin{cases}  \tau_{ps}(1,1,e) \quad \text{if} \quad  e\vert(q-1); \\
            \tau_{sc}(u,1,e)\quad \text{if} \quad e\vert(q+1),\end{cases}\]
    where $\tau_{ps}$ and~$\tau_{sc}$ are given by Definition~\ref{Labels of
        types}.
\end{lemma}
\begin{proof}
    The proof is perfectly analogous to the proof of \cite[Proposition 4.2.1]{dembélé2024galoisinertialtypeselliptic}
\end{proof}

\begin{corollary}\label{explicit inertial types for p>5}
    Let $E/F$ be an elliptic curve with semistability defect $e\in \{3,4,6\}$.

    1) If $n$ is even, then $\tau_{E}$ is either $\tau_{ps}(1,1,4),\tau_{ps}(1,1,3)$ or $\tau_{ps}(1,1,6)$.

    2) If $n$ is odd, then we have the following cases:
    \begin{itemize}
        \item If $p\equiv1 \pmod{12}$, then $\tau_{E}$ is either
              $\tau_{ps}(1,1,4),\tau_{ps}(1,1,3)$ or $\tau_{ps}(1,1,6)$;
        \item If $p\equiv5 \pmod{12}$, then $\tau_{E}$ is either
              $\tau_{ps}(1,1,4),\tau_{sc}(u,1,3)$ or $\tau_{sc}(u,1,6)$;
        \item If $p\equiv7 \pmod{12}$, then $\tau_{E}$ is either
              $\tau_{sc}(u,1,4),\tau_{ps}(1,1,3)$ or $\tau_{ps}(1,1,6)$;
        \item If $p\equiv11 \pmod{12}$, then $\tau_{E}$ is either
              $\tau_{sc}(u,1,4),\tau_{sc}(u,1,3)$ or $\tau_{sc}(u,1,6)$.
    \end{itemize}
\end{corollary}
\begin{proof}
    Since $p\geq5$, we have $p\equiv \pm 1 \pmod 6$ and $p\equiv \pm1 \pmod4$.

    If $n$ is odd, then $q=p^n\equiv p\pmod6$ and $q=p^n\equiv p\pmod4$. By the
    Chinese remainder theorem, $q\equiv p\pmod{12}$. If $n$ is even then $p^n\equiv
        1 \pmod{12}$. In both cases, the result follows by Proposition~\ref{Pot Good
        p=5}.
\end{proof}

This corollary together with Lemmas~\ref{Pot Good Uniform} and~\ref{Pot Mult
    Uniform} provide a complete classification of the inertial types of elliptic
curves $E$ defined over any finite extension $F/\ma Q_p$ for $p\geq 5$.

\begin{example}\label{Example p=5} Let $\pi$ be a uniformizer of $F$. An application of~\cite[\S 1, Proposition 1]{Kraus1990} shows that the following curves have the following inertial types.
    \begin{enumerate}
        \item The curve $y^2=x^3+\pi^2x$ has inertial type $\tau_{ps}(1,1,2)$;
        \item The curve $y^2=x^3+\pi^2$ has inertial type $\tau_{ps}(1,1,3)$ or
              $\tau_{sc}(u,1,3)$;
        \item The curve $y^2=x^3+\pi x$ has inertial type $\tau_{ps}(1,1,4)$ or
              $\tau_{sc}(u,1,4)$;
        \item The curve $y^2=x^3+\pi$ has inertial type $\tau_{ps}(1,1,6)$ or
              $\tau_{sc}(u,1,6)$,
    \end{enumerate}
    where in (2)--(4), the type is principal series or supercuspidal depending on $p$ and~$n$ as prescribed by Corollary~\ref{explicit inertial types for p>5}.
\end{example}

\subsection{Tame cases for $p=2,3$}
The tame cases for $p=2$ and $p=3$ can also be treated uniformly. Let $\pi \in
    F$ be a uniformizer, and consider the elliptic curves
\[
    E_1 / F : \; y^2+\pi y=x^3 \qquad \text{ and } \qquad E_2 / F \; : \; y^2=x^3-\pi x,
\]
where $F$ is a finite extension of $\Q_2$ for $E_1$ and a finite extension of
$\Q_3$ for $E_2$. An application of Tate's algorithm and~\cite{Kraus1990} shows
that their semistability defects are $e(E_1) = 3$ and $e(E_2) = 4$; in
particular, they have tame reduction and $v(N_{E_i})=2$.
\begin{lemma}\label{L: tame for p=2 and p=3}
    Let $F/\Q_2$ (respectively $F/\Q_3$) be a finite extension.
    There is a unique inertial type $\tau$ defined over $F$ with trivial determinant and image $C_3$ (respectively $C_4$). Moreover, it satisfies $m(\tau) = 2$ and
    \begin{enumerate}[(a)]
        \item $\tau$ is a principal series if $\mu_3\subset F$ (respectively $\mu_4\subset F$) and unramified supercuspidal otherwise;
        \item $\tau$ is the inertial type of the elliptic curve $E_1 / F$ (respectively $E_2 / F$).
    \end{enumerate}
\end{lemma}

\begin{proof}

    Let $\tau$ be a inertial type defined over a finite extension $F/\Q_2$ with
    trivial determinant and image isomorphic to~$C_3$. Since $\Image \tau \simeq
        C_3$, it follows from~\S\ref{sec:typesCurves} that $\tau$ is either a principal
    series or unramified supercuspidal. In both cases, $\tau\simeq \chi\oplus
        \chi^{-1}$ for $\chi:I_F\xrightarrow{} \ma C$ a character of order $3$. Since
    $\gcd(2,3)=1$, there is a unique $C_3$-extension~$N/F^{un}$, fixed by the
    kernel of a character $\varepsilon_N$. It follows that $\chi$ is equal to
    $\varepsilon_N^{\pm 1}$ with both possibilities giving the same inertial type,
    which is then unique, and satisfies $m(\tau) = 2$. This proves the first part
    of the lemma.

    If $\mu_3\subset F$, then $\mu_3\subset(\cl O_F/\p)^{\times}$ and so there is
    an order $3$ character $\chi^A:(\cl O_F/\p)^{\times}\xrightarrow{}\ma C$. Let
    $\theta$ be a character of ~$W_F$ such that $\theta|_{I_F} = \chi$. It follows
    that $\tilde{\tau}:= PS (\theta, \theta^{-1})|_{I_F}=\chi\oplus\chi^{-1}$ is a
    principal series with trivial determinant, image $C_3$ and it satisfies
    $m(\tilde{\tau})=2$.

    If $\mu_3\not \subset F$, then $\mu_3\not\subset(\cl O_F/\p)^{\times}$ and
    $\mu_3\subset(\cl O_K/\p)^{\times}$ where $K/F$ is quadratic unramified, and so
    there is a character $\chi^A$ of $(\cl O_K/\p)^{\times}$ of order $3$.
    Moreover, $(\cl O_F/\p)^{\times}$ is cyclic of order coprime to $3$, so we can
    choose $\chi^A$ to be trivial on $(\cl O_F/\p)^{\times}$. Let $\theta$ be a
    character of~$W_K$ such that~$\theta|_{I_K} = \chi$. Since $K/F$ is unramified,
    the character $\varepsilon_K$ is trivial and we have $\theta^{A}\vert_{\cl
        O_F^{\times}}=\chi^{A}\vert_{\cl O_F^{\times}}=\varepsilon_K^{A} = 1$. By
    Lemma~\ref{cyclotomicdeterminant}, we have $\chi^s=\chi^{-1} \neq \chi$. It
    follows that $\tilde{\tau}:= (\Ind_{W_K}^{W_F} \theta)|_{I_F}
        =\chi\oplus\chi^s$ is unramified supercuspidal with trivial determinant, image
    $C_3$ and it satisfies $m(\tilde{\tau})=2$.

    By the uniqueness of $\tau$ we conclude $\tilde{\tau} = \tau$ in both cases,
    proving (a). Part (b) now follows directly from the paragraph preceding the
    lemma.

    The case of $F$ a finite extension of $\Q_3$ is analogous.
\end{proof}

\section{Necessary conditions}\label{S: Necessary conditions}

In this section, we show that the hypothesis appearing in Theorem~\ref{T: Main
    Theorem for p=3} and Theorem~\ref{T: Main Theorem for p=2} are satisfied by
types arising from elliptic curves.

\subsection{Necessary conditions for $p=3$}
The proof that the assumptions in Theorem~\ref{T: Main Theorem for p=3} are
necessary is straightforward. Indeed let $F/\Q_3$ be a finite extension and
$E/F$ an elliptic curve. Let $\tau$ be its inertial type with associated
inertia character $\chi$. It is clear that $\det \tau =1$. Assume first that
$\tau$ is a principal series. Since the image of inertia is abelian, by
Theorem~\ref{semistability groups} it follows that the order of $\chi$ is
$2,3,4,6$. If $\tau_E$ is unramified supercuspidal the same reasoning applies,
excluding $e=2$ as this only occurs for principal series by Lemma~\ref{Pot Good
    Uniform}; thus $\chi$ has order $3,4$ or $6$. If $\tau_E$ is ramified
supercuspidal, then its image is irreducible and so by
Theorem~\ref{semistability groups} it must be $C_3\rtimes C_4$.

\subsection{Necessary conditions for $p=2$}

We will now prove that the hypothesis of Theorem~\ref{T: Main Theorem for p=2}
are necessary. The proof of $(i)$ and $(ii)$ is completely analogous to the
case $p=3$, but proving $(iii)$ and $(iv)$ requires considerably more effort.

The following theorem implies that the hypothesis of Theorem~\ref{T: Main
    Theorem for p=2} part $(iii)$ are necessary.

\begin{theorem}\label{T: Images of Galois}
    Let $E/F$ an elliptic curve with potentially good reduction. Let $\rho_E$ be the Weil-Deligne representation associated with $E$ and $\tau=\rho_E\vert_{I_F}$. Let $G=\operatorname{Gal(F(E[3])/F)}$. Suppose that $\tau(I_F)$ is nonabelian.
    \begin{enumerate}[(a)]
        \item If $\mu_3 \subset F$, then we have the following possibilities:
              \begin{enumerate}[(i)]
                  \item $\rho_E$ is triply imprimitive with $\tau(I_F)\simeq Q_8$ and $G= Q_8$;
                  \item $\rho_E$ is exceptional with $\tau(I_F)\simeq Q_8$ and $G=\SL_2(\ma F_3)$, or
                  \item $\rho_E$ is exceptional with $\tau(I_F)\simeq \SL_2(\ma F_3)$
                        and $G=\SL_2(\ma F_3)$.
              \end{enumerate}

        \item If $\mu_3 \not\subset F$, then we have the following possibilities:
              \begin{enumerate}[(i)]
                  \item $\rho_E$ is simply imprimitive with $\tau(I_F)\simeq Q_8$ and $G=\SD_{16}$, the $2$-sylow subgroup of~$\GL_2(\ma F_3)$, or
                  \item $\rho_E$ is exceptional with $\tau(I_F)\simeq\SL_2(\ma F_3)$
                        and $G=\GL_2(\ma F_3)$.
              \end{enumerate}
    \end{enumerate}
\end{theorem}
\begin{proof}
    Let $\rho:=\rho_E$ and let $\rhobar_{E,3}$ be the 3-torsion representation of $E$. The inertial types with nonabelian image of inertia are either exceptional or ramified supercuspidal types.

    By~\cite[Lemma 1]{Dokchitser_2008} there is an unramified twist
    $\eta\tenso\rho$ such that $\Image(\eta\tenso\rho)\simeq G$, note that an
    unramified twist does not affect the inertial type. We also have
    $\Image\overline{\rho}_{E,3}\simeq G$ as abstract groups and, moreover, if $G$
    is non-abelian, then the projective images of $\eta\tenso\rho$ and
    $\overline{\rho}_{E,3}$ are both isomorphic to~$G/Z(G)$. Since twisting does
    not affect the projective image, we conclude that the projective images of
    $\rho$ and~$\rhobar_{E,3}$ are isomorphic to $G/Z(G)$ as abstract groups. We
    know that $\tau_E(I_F)$ must be isomorphic to a subgroup of $G$, below we use
    the table in~\cite[Proposition 2]{Dokchitser_2008} to distinguish between
    cases.

    Assume $\mu_3\subset F$. Then, the possibilities for~$G$ containing
    $\tau(I_F)=Q_8$ are: $G=Q_8$, case (i), $G=\SL_2(\F_3)$, case (ii), and for $G$
    containing $\tau(I_F)\simeq\SL_2(\ma F_3)$ is $G=\SL_2(\ma F_3)$, case (iii).
    To complete the proof of (a), it remains to show which inertial type correspond
    to each case. The projective image of $\overline{\rho}_{E,3}$ is either $D_2$
    in case (i) or $A_4$ in cases (ii) and (iii). By Proposition~\ref{triply
        imprimitive} the first case implies that $\rho_E$ is triply imprimitive and the
    last two imply that $\tau_E$ is exceptional.

    Assume $\mu_3\not\subset F$. Then, for $\tau(I_F)=Q_8$ the possible $G$ are
    $\SD_{16}$ and~$\GL_2(\ma F_3)$; the former has projective image $D_4$ so, by
    Proposition~\ref{triply imprimitive}, it must be simply imprimitive; the latter
    has projective image $S_4$ so it is exceptional. For $\tau(I_F)=\SL_2(\ma F_3)$
    the only option is $G = \GL_2(\ma F_3)$.

    To complete the proof of (b), we need to show that if $\tau$ is exceptional,
    then $\tau(I_F)=Q_8$ is not possible. Assume that $\rho_E$ is exceptional.
    Then, $G=\GL_2(\ma F_3)$ and the projective image of $\overline{\rho}_{E,3}$ is
    $S_4$. By the reasoning in the first paragraph of \cite[\S 42.3]{Bushnell2006},
    projective image~$S_4$ corresponds to the ``Octaedral'' case, that is, an
    exceptional representation that becomes imprimitive over a non Galois cubic
    extension $C/F$, and becomes triply imprimitive over $L$, the Galois closure of
    $C/F$. If $\tau_L$ is triply imprimitive in particular $\tau(I_L)=Q_8$ so the
    only possibility to get $\tau(I_F)=Q_8$ is that $C/F$ is unramified and non
    Galois which is impossible. Therefore, if $G=\GL_2(\ma F_3)$ necessarily the
    image of inertia is $\SL_2(\ma F_3)$.
\end{proof}

\begin{remark}\label{R: Q8conductor}
    Observe that there are inertial types with image $Q_8$ and conductor possible for elliptic curves that do not arise from elliptic curves. For instance, the last bullet point of the proof of \cite[Proposition 6.5.1]{dembélé2024galoisinertialtypeselliptic} exhibits a triply imprimitive type with image $Q_8$ and conductor $2^8$ defined over $\ma Q_2$. Since $\mu_3\not\subset \Q_2$ it follows form the previous theorem that this type does not come from an elliptic curve over $\Q_2$.
\end{remark}

\begin{proposition}\label{P: Conditions for Exceptionals triply}
    Let $E/F$ then $\tau_E$ satisfies $(a),(b),(c),(d)$ in part $(iv)$ of Theorem~\ref{T: Main Theorem for p=2}.
\end{proposition}

\begin{figure}[h]
    \[
        \xymatrix{
            & F(E[3])  \ar@{-}[d]^2 &     &&
            & F(E[3])  \ar@{-}[d]^2\\
            & M \ar@{-}[d]^2 \ar@{-}[dl] \ar@{-}[dr]  &     &&
            & M \ar@{-}[d]^2 \ar@{-}[dl] \ar@{-}[dr] \\
            K_1 \ar@{-}[dr]& K_2 \ar@{-}[d]^2 & K_3 \ar@{-}[dl] &&
            K_1 \ar@{-}[dr] & K_2 \ar@{-}[d]^2 & K_3 \ar@{-}[dl] \\
            & L \ar@{-}[d]^3   &     &&     & L \ar@{-}[dl]_2 \ar@{-}[dr]^3 \\
            & F    &     && C \ar@{-}[dr]_3   &     & F(\mu_3) \ar@{-}[dl]^2 \\
            &    &     &&     & F
        }
    \]
    \caption{The left diagram corresponds to an exceptional type when $\mu_3\subset F$; the right diagram to the case when $F$ does not contain $\mu_3$}.
    \label{fig:exceptional diagram}
\end{figure}
\begin{proof}
    The strategy is to base change the exceptional
    inertial types to the field $L$ where they become triply imprimitive. By~\cite[Lemma 1]{Dokchitser_2008} we can consider $\tilde{\rho}=\rho_E\tenso\eta$ where $\eta$ is an unramified character and $\tilde{\rho}$ factors through $\Gal(F(E[3])/F)$. Thus by Theorem~\ref{T: Images of Galois} $\Image\tilde{\rho}$ is $\SL_2(\F_3)$ if $\mu_3\subset F$ or $\GL_2(\F_3)$ if $\mu_3\not\subset F$. In both cases we can consider $L$ to be the fixed field of the subgroup $Q_8$ inside $\Image\tilde{\rho}$ and $\rho:=\tilde{\rho}_L$. It is clear that $\Image\rho=\Image\tau_L=Q_8$ and thus $\rho$ has projective image $Q_8/Z(Q_8)=D_2$ so by Proposition~\ref{triply imprimitive} $\rho$ is triply imprimitive. By looking at the lattice of subgroups of $\Image\tilde{\rho}$ it follows that the field $L$ is precisely the field of Proposition~\ref{P: base change of exceptional types}, more precisely it is $L$ if $\mu_3\subset F$ or the $\tilde{L}$ if $\mu_3\not\subset F$. Thus we know that the fields from where $\tilde{\rho}$ is induce are permuted transitively and thus they form an irreducible submodule of $L^{\times}/(L^{\times})^2$. The Galois invariance of $\rho$ comes from the fact that $\rho$ is the restriction of $\tilde{\rho}$ define over $W_F$.
\end{proof}

\section{Sufficient conditions}\label{S: sufficient conditions}

This section is devoted to proof the remaining direction of the equivalences in
our main theorems. Given a $\tau$ as in their statements, our strategy is to
build a continuous representation $\rho$ and an injection $\iota:\Image \rho
    \hookrightarrow{}\GL_2(\F_{\ell})$ in such a way that we can apply the
following proposition. Let $\chi_{\ell}$ be the$\mod\ell$ cyclotomic character.

\begin{proposition}\label{Rubin}
    Let $\rho: G_F \xrightarrow{}\ma C$ be a continuous representation and let $\tau:=\rho\vert_{I_F}$. Let $\iota:\Image \rho \hookrightarrow{}\GL_2(\F_{\ell})$ be an injection with $\ell=3$ or $5$ and such that $\det(\iota\circ\rho)=\chi_{\ell}$. Then, there is an elliptic curve $E/F$ with $\tau_E\simeq\tau$.
\end{proposition}

\begin{proof}
    Since $\iota\circ\rho$ has cyclotomic determinant, by~\cite[Theorem 3]{Rubin}, there is an elliptic curve $E/F$ with $\barra{\rho}_{E,\ell}\simeq\iota\circ\rho$. Since $\overline{F}^{\ker\tau_E}=F^{un}(E[\ell])$ restricting $\barra{\rho}_{E,\ell}\simeq\iota\circ\rho$ to inertia gives $$\overline{F}^{\ker \tau_E}=F^{un}(E[\ell])=\overline{F}^{\ker\tau}$$ and hence $\tau_E\simeq\tau$ by Theorem~\ref{Inertia field determine the type}.
\end{proof}

To apply our strategy, we need to control the image of $\rho$ which in the
ramified supercuspidal case will be given by the following lemma.

\begin{lemma}\label{L: Image of ramified inductions}
    Let $K/F$ be a quadratic ramified extension, $\theta$ a character of $W_K$, and fix $\pi\in \cl O_K$ a uniformizer. Set $\chi:=\theta\vert_{I_K}$ and $\lambda:=\chi^A(\pi/s(\pi))$. Let $\rho=\Ind_{W_K}^{W_F} \theta $. If $\operatorname{Ord}(\chi)=d\in\{4,6\}$ and $\chi^{A}\vert_{\cl O_F^{\times}}=\varepsilon_K^{A}$, then $\Image\rho\subset \GL_2(\ma C)$ is generated by
    \begin{align*}
        \rho(u_1)=\begin{pmatrix}
                      \zeta_d & 0            \\
                      0       & \zeta_d^{-1}
                  \end{pmatrix}
        \quad\quad\quad
        \rho(u_2)=\begin{pmatrix}
                      0       & -\zeta_d^{-1} \\
                      \zeta_d & 0
                  \end{pmatrix}
        \quad\quad\quad
        \rho(\Frob_K)=\begin{pmatrix}
                          \theta(\Frob_K) & 0                                 \\
                          0               & \lambda^{-1}\cdot \theta(\Frob_K)
                      \end{pmatrix}
    \end{align*}
    for some $u_1,u_2\in I_F$ where $\Frob_K=\Art_K(\pi)$. In particular, \begin{enumerate}[(a)]
        \item If $d=4$, $\lambda=1$ and $\theta(\Frob_K)=1$ then $\Image \rho =Q_8$;
        \item If $d=4$, $\lambda=-1$ and $\theta(\Frob_K)=i$ then $\Image \rho =Q_8$;
        \item If $d=4$, $\lambda=-i$ and $\theta(\Frob_K)=\zeta_8$ then $\Image \rho
                  =\SD_{16}$;
        \item If $d=6$, $\lambda=-1$ and $\theta(\Frob_K)=\zeta_8$ then $\Image \rho
                  =C_{24}\rtimes C_2$;
        \item If $d=4$, $\lambda=1$ and $\theta(\Frob_K)=i$ then $\Image \rho =C_4\times
                  S_3$;
        \item If $d=4$, $\lambda=1$ and $\theta(\Frob_K)=1$ then $\Image \rho =C_3\rtimes
                  C_4$.
    \end{enumerate}

\end{lemma}
\begin{proof}
    The image of $\rho$ is generated by $\rho(I_F)$ and $\rho(\Frob_F)$ for any $\Frob_F$. By Lemma~\ref{cor: the inductions are really irreducible}, we know that $\rho\vert_{I_F}=\Ind_{I_K}^{I_F} \chi$ is irreducible and $[I_K:I_F]=2$ so by Lemma~\ref{L: Inductions have image Q8}  $\rho(I_F)=Q_8$ if $d=4$ and $\rho(I_F)=C_3\rtimes C_4$ if $d=6$. In both cases one can easily check that $\rho(I_F)$ is generated by the image of two elements $u_1,u_2$ as in the statement. Since $K/F$ is quadratic ramified, any Frobenius element for $K$ is a Frobenius for $F$, so we can choose $\Frob_F:=\Frob_K=\Art(\pi)$. Finally, we have
    $\rho(\Frob_K)=\left( \begin{smallmatrix} \theta(\Frob_K)& 0 \\ 0& \theta^s(\Frob_K) \end{smallmatrix}\right)$ and, by hypothesis, $$\lambda=\chi^A(\pi/s(\pi))=\theta^A(\pi)/\theta^A(s(\pi))=\theta(\Frob_K)/\theta^s(\Frob_K)\implies \theta^s(\Frob_K)=\lambda^{-1}\theta(\Frob_K).$$

    The last statements follow from a direct computation.
\end{proof}

\begin{corollary}\label{C: Rho with Q8 exits}
    Let $\rho$ be a triply imprimitive representation of~$W_F$ with unramified determinant. If $\rho(I_F)=Q_8$, then there is a triply imprimitive representation $\rho'$ such that $\rho\vert_{I_F}=\rho'\vert_{I_F}$ and $\rho'(W_F)=Q_8$. Moreover, $\rho'=\eta\tenso\rho$ where $\eta$ is an unramified character of $W_F$.
\end{corollary}

\begin{proof}
    Let $\theta$ be the character inducing $\rho$ from a ramified quadratic extension $K/F$. Since $\rho(I_F)=Q_8$ and $\rho$ has unramified determinant, we have that $\chi:=\theta|_{I_K}$ has order~4 and
    Lemma~\ref{L: explicit tiply conditions} implies $\lambda = \chi^A(\pi/s(\pi))=\pm1$.
    Let $\eta$ be the unramified character satisfying $\eta(Frob_K)=\theta(Frob_K)^{-1}$ if $\lambda=1$ or $\eta(Frob_K)=-i\cdot\theta(Frob_K)$ if $\lambda=-1$. Then, $\theta' = \theta\otimes\eta$ satisfies Lemma~\ref{L: Image of ramified inductions} $(a)$ or $(b)$.
    Therefore, $\rho'=\Ind_{W_K}^{W_F} \theta' $ satisfies that
    $\rho\vert_{I_F}=\rho'\vert_{I_F}$ and $\rho'(W_F)=Q_8$. Moreover, since
    $K/F$ is totally ramified,
    $\eta$ extends to $W_F$ and so $\rho'=\eta\tenso\rho$.
\end{proof}

\begin{lemma}\label{L: Image of unramified inductions}
    Let $K/F$ be quadratic unramified, $\theta$ a character of $W_K$ and $\chi:=\theta|_{I_K}$. Define $\gamma:=\theta(\Frob_F^2)$ and $\rho=\Ind_{W_K}^{W_F}\theta$. If $\operatorname{Ord}(\chi)=d\in\{4,6\}$ and $\chi^{s}=\chi^{-1}$, then $\Image\rho\subset \GL_2(\ma C)$ is generated by
    \begin{align*}
        \rho(u_1)=\begin{pmatrix}
                      \zeta_d & 0            \\
                      0       & \zeta_d^{-1}
                  \end{pmatrix}
        \quad\quad\quad
        \rho(\Frob_F)=\begin{pmatrix}
                          0 & \gamma \\
                          1 & 0
                      \end{pmatrix}
    \end{align*}
    for some $u_1\in I_F$. In particular, \begin{enumerate}[(a)]
        \item If $d=3$ and $\gamma=1$ then $\Image \rho =C_3\rtimes C_4$;
        \item If $d=3$ and $\gamma=-1$ then $\Image\rho =S_3$;
        \item If $d=3$ and $\gamma=i$ then $\Image\rho=C_3\rtimes C_8$;
        \item If $d=4$ and $\gamma=1$ then $\Image\rho=D_4$;
        \item If $d=4$ and $\gamma=-1$ then $\Image\rho=Q_8$;
    \end{enumerate}
\end{lemma}

\begin{proof}
    The image of $\rho$ is generated by $\rho(I_F)$ and $\rho(\Frob_F)$. Since $K/F$ is unramified, we have that $I_F=I_K$ and $\Frob_F^2$ is a Frobenius for $K$. Thus, $\rho(I_F)=\rho(I_K)\simeq\theta(I_K)\simeq C_d$ is generated by a matrix as in the statement. Since $\{1,\Frob_F\}$ is a set of representatives for~$W_F/W_K$, using the definition of induced representation, it is easy to check that $\rho(\Frob_F)$ is the desired matrix. The last statements follows by direct computation.
\end{proof}

\subsection{Sufficient conditions for $p=3$} We will use $\ell=5$ to build the representation $\bar{\rho}:=\iota\circ\rho$
as in Proposition~\ref{Rubin}. We need the following auxiliary lemmas.

\begin{lemma}\label{L: injections p=3} Let $G$ be a group.
    The maps $\iota$ defined below on generators of $G$ is an isomorphism from $G$ as a subgroup of $\GL_2(\ma C)$ to $G$ as subgroup of $\GL_2(\ma F_5)$.
    \begin{enumerate}[(a)]
        \item $G=C_4\times S_3$
              \begin{align*}
                  \begin{pmatrix}
                      \zeta_6 & 0            \\
                      0       & \zeta_6^{-1}
                  \end{pmatrix}
                   & \xrightarrow{\iota}
                  \begin{pmatrix}
                      3 & 1 \\
                      3 & 3
                  \end{pmatrix}
                  \quad\quad\quad
                  \begin{pmatrix}
                      0       & -\zeta_6^{-1} \\
                      \zeta_6 & 0
                  \end{pmatrix}
                  \xrightarrow{\iota}
                  \begin{pmatrix}
                      2 & 0 \\
                      0 & 3
                  \end{pmatrix}
                  \quad\quad\quad
                  \begin{pmatrix}
                      i & 0 \\
                      0 & i
                  \end{pmatrix}
                  \xrightarrow{\iota}
                  \begin{pmatrix}
                      3 & 0 \\
                      0 & 3
                  \end{pmatrix};
              \end{align*}

        \item $G=C_{24}\rtimes C_{2}$
              \begin{align*}
                  \begin{pmatrix}
                      \zeta_6 & 0            \\
                      0       & \zeta_6^{-1}
                  \end{pmatrix}
                   & \xrightarrow{\iota}
                  \begin{pmatrix}
                      0 & 1 \\
                      4 & 1
                  \end{pmatrix}
                  \quad\quad\quad
                  \begin{pmatrix}
                      0       & -\zeta_6^{-1} \\
                      \zeta_6 & 0
                  \end{pmatrix}
                  \xrightarrow{\iota}
                  \begin{pmatrix}
                      0 & 2 \\
                      2 & 0
                  \end{pmatrix}
                  \quad\quad\quad
                  \begin{pmatrix}
                      \zeta_8 & 0        \\
                      0       & -\zeta_8
                  \end{pmatrix}
                  \xrightarrow{\iota}
                  \begin{pmatrix}
                      4 & 2 \\
                      3 & 1
                  \end{pmatrix}.
              \end{align*}

        \item $G=C_3\rtimes C_8$
              \begin{align*}
                  \begin{pmatrix}
                      \zeta_3 & 0            \\
                      0       & \zeta_3^{-1}
                  \end{pmatrix}
                   & \xrightarrow{\iota}
                  \begin{pmatrix}
                      1 & 4 \\
                      3 & 3
                  \end{pmatrix}
                  \quad\quad\quad
                  \begin{pmatrix}
                      0 & i \\
                      1 & 0
                  \end{pmatrix}
                  \xrightarrow{\iota}
                  \begin{pmatrix}
                      4 & 2 \\
                      3 & 1
                  \end{pmatrix}.
              \end{align*}
        \item $G=S_3$
              \begin{align*}
                  \begin{pmatrix}
                      \zeta_3 & 0            \\
                      0       & \zeta_3^{-1}
                  \end{pmatrix}
                   & \xrightarrow{\iota}
                  \begin{pmatrix}
                      2 & 1 \\
                      3 & 2
                  \end{pmatrix}
                  \quad\quad\quad
                  \begin{pmatrix}
                      0 & 1 \\
                      1 & 0
                  \end{pmatrix}
                  \xrightarrow{\iota}
                  \begin{pmatrix}
                      4 & 0 \\
                      0 & 1
                  \end{pmatrix}.
              \end{align*}

        \item $G=C_3\times C_8$
              \begin{align*}
                  \begin{pmatrix}
                      \zeta_3 & 0            \\
                      0       & \zeta_3^{-1}
                  \end{pmatrix}
                   & \xrightarrow{\iota}
                  \begin{pmatrix}
                      0 & 4 \\
                      1 & 4
                  \end{pmatrix}
                  \quad\quad\quad
                  \begin{pmatrix}
                      \zeta_8 & 0            \\
                      0       & \zeta_8^{-1}
                  \end{pmatrix}
                  \xrightarrow{\iota}
                  \begin{pmatrix}
                      4 & 2 \\
                      3 & 1
                  \end{pmatrix}.
              \end{align*}
        \item $G=C_3\times C_4$
              \begin{align*}
                  \begin{pmatrix}
                      \zeta_3 & 0            \\
                      0       & \zeta_3^{-1}
                  \end{pmatrix}
                   & \xrightarrow{\iota}
                  \begin{pmatrix}
                      1 & 2 \\
                      1 & 3
                  \end{pmatrix}
                  \quad\quad\quad
                  \begin{pmatrix}
                      i & 0 \\
                      0 & i
                  \end{pmatrix}
                  \xrightarrow{\iota}
                  \begin{pmatrix}
                      3 & 0 \\
                      0 & 3
                  \end{pmatrix}.
              \end{align*}

    \end{enumerate}
\end{lemma}

\begin{proof}
    Direct computation.
\end{proof}

\begin{lemma}\label{L: value of chi depending of mu4}
    Let $F/\ma Q_3$ be a finite extension with residual degree $f$, and $\chi:\cl O_F^{\times}\xrightarrow{}\ma C^{\times}$ a character of order $2d$ with $d$ odd. Then, $\chi(-1)=1$ if and only if $f\equiv0\pmod 2$; the latter condition is equivalent to $\mu_4\subset F$.
\end{lemma}

\begin{proof}
    Clearly $\chi(-1)=\pm 1$. If $f\equiv0\pmod 2$, then there is $\zeta_4\in \cl O_F^{\times}$ such that $\zeta_4^2=-1$ and so $\chi(-1)=\chi(\zeta_4^2)=\chi(\zeta_4)^2$. Since $\chi$ has order $2d$ and $\chi(\zeta_4)$ has order dividing $4$, we have $\chi(-1)=1$. Conversely, $f\equiv1\pmod 2$ implies that $\mu_4\not\subset F$, hence the group of units $\cl O_F^{\times}\simeq \mu_F \times \ma Z^{[F:\ma Q_3]}_3\simeq \langle-1\rangle\times \mu_r \times \ma Z^{[F:\ma Q_3]}_3$ where $r$ is odd. Therefore, the quotient of $\cl O_F^{\times}$ by any finite index subgroup $H$ containing $-1$ has odd order. So, if $-1\in\ker \chi$, then $ \cl O_F^{\times}/\ker \chi \simeq \Image\chi\simeq C_{2d}$ must have odd order, a contradiction. Thus $-1\notin\ker \chi$ and~$\chi(-1)=-1$.
\end{proof}

We now have all the tools to complete the proof of Theorem~\ref{T: Main Theorem
    for p=3}.

\begin{proof}[End of proof of Theorem~\ref{T: Main Theorem for p=3}]

    Let $f$ be the common residual degree of $F/\ma Q_3$ and $K/\Q_3$. We recall
    that $\chi_5(I_F)=1$ and $\chi_5(\Frob_F)=3^{-f}\in\F_5$ (note that $\Frob_F$
    is a geometric Frobenius).

    In what follows, we are going to define representations $\rhobar$ of $W_F$ with
    finite image. Thus, $\rhobar$ factors through a finite quotient $W_F/W_N\simeq
        G_F/G_N$. It follows that, if we extend $\rhobar$ to a representation of $W_F$
    or $G_F$, then it has open kernel. So $\rhobar$ is continuous and we can apply
    Proposition~\ref{Rubin}.

    Let us begin with case $(iii)$. Let $\tau$ be a inertial type as in the
    statement of this case and $\chi$ be an associated inertia character. We can
    extend $\chi$ to a character $\theta$ of $K^{\times}$ by defining $\theta(\pi)$
    for $\pi$ a uniformizer in $K$. For quadratic ramified extensions of $3$-adic
    fields, we can assume that $s(\pi)=-\pi$ so
    $\lambda:=\chi(\frac{\pi}{s(\pi)})=\chi(-1)$. Since the determinant of $\tau$
    is trivial and $\Image\tau=C_3\rtimes C_4$, then, using Lemma~\ref{L: Elliptitc
        Curve SCR conditions for chi}, $\chi$ satisfies the hypothesis of Lemma~\ref{L:
        Image of ramified inductions} with $d=6$. We can now use this to control the
    the image of $\rho$ by choosing $\theta(\pi)$ adequately. We now split the
    proof of $(iii)$ into 4 cases.

    If $f\equiv 1 \pmod 2$, then $\chi(-1)=-1$ by Lemma~\ref{L: value of chi
        depending of mu4}, and we define $\theta(\Frob_K)=\zeta_8$. This gives that the
    image of $\rho$ is isomorphic to $C_{24}\rtimes C_2$ by Lemma~\ref{L: Image of
        ramified inductions} $(d)$. Let $\rhobar:=\iota\circ\rho$ where $\iota$ is the
    isomorphism of Lemma~\ref{L: injections p=3} (b). This gives $\det
        \rhobar(\Frob_K)=3\in\F_5$ and $\det\rhobar(I_F)=1$ which is $\chi_5$ if
    $f\equiv3\pmod 4$. If $f\equiv 1\pmod 4$ the same reasoning applies with
    $\rhobar$ replaced by $g\circ\rhobar$ where $g$ is the automorphism of
    $\GL_2(\F_5)$ define as $g(A)=(A^{-1})^{T}$. The conclusion follows from
    Proposition~\ref{Rubin}.

    If $f\equiv2 \pmod 4$, then $\lambda=\chi(-1)=1$ and we define
    $\theta(\Frob_K)=i$ so that the image of $\rho$ is $C_4\times S_3$ by
    Lemma~\ref{L: Image of ramified inductions} (e). We consider
    $\rhobar=\iota\circ\rho$ where $\iota$ is the injection in Lemma~\ref{L:
        injections p=3} (a) thus $\det\rhobar(\Frob_K)=4$ and $\det\rhobar|_{I_F}=1$
    and we finish as before.

    Finally, if $f\equiv0\pmod 4$, then by choosing $\theta(\Frob_K)=1$ the image
    of $\rho$ is $C_3\rtimes C_4$. Since $\chi_5$ is trivial in this case, any
    injection $\iota : C_3\rtimes C_4 \hookrightarrow \SL_2(\ma F_5)$ works,
    finishing the proof of~$(iii)$.

    The cases $\Image\tau=C_2$ and $\Image \tau =C_4$ are treated in Lemma~\ref{Pot
        Good Uniform} and Lemma~\ref{L: tame for p=2 and p=3} respectively. It remains
    to prove the wild cases of $(i)$ and $(ii)$. By Lemma~\ref{L: Quadratic Twists
        of Ell Curves} all $C_6$ inertial types are quadratic twists of $C_3$ types so
    it is enough to deal with this case. The proof of cases $(i)$ and $(ii)$ is
    analogous to $(iii)$ but easier.

    For the proof of $(i)$ let $\tau$ be a principal series with unramified
    determinant and image $C_3$ and let $\chi$ be an associated inertia character.
    By fixing the value of $\lambda:=\theta(Frob_F)$ we can extended $\chi$ to a
    character $\theta$ of $W_F$. Let $\rho:=\theta\oplus\theta^{-1}$ we are going
    to define now the value of $\lambda$ and a map $\iota$ so that
    $\rhobar:=\iota\circ\rho$ has determinant $\chi_5$. If $f\equiv1\pmod 2$ set
    $\theta(\Frob_F)=\zeta_8$, $\iota$ be the injection in~\ref{L: injections p=3}
    (e) then, either $\det\rhobar=\chi_5$ or $\det(g\circ\rhobar)=\chi_5$. If
    $f\equiv2\pmod 4$ set $\theta(\Frob_F)=i$ and $\iota$ to be the injection
    in~\ref{L: injections p=3} (f). Finally if $f\equiv0\pmod 4$ set
    $\theta(\Frob_F)=1$ and $\iota$ be any injection of $C_3$ into $\GL_2(\F_5)$.
    In both cases $\det\rhobar=\chi_5$.

    For the proof of $(ii)$ let $\tau$ be a $C_3$ unramified supercuspidal type
    with trivial determinant and let $\chi$ be an associated inertia character. As
    before we can extend $\chi$ to $\theta$ by choosing the image of any Frobenius
    of $K$. Since $K/F$ is unramified we can choose $Frob_F^2$ as a Frobenius of
    $K$ and use Lemma~\ref{L: Image of unramified inductions} to control the image
    of $\rho$. Let $\rhobar:=\iota\circ\rho$, from here on the proof is completely
    analogous to the previous ones we just present the relevant choices. If
    $f\equiv1\pmod 2$ set $\theta(\Frob_F^2)=i$ and $\iota$ the injection in
    Lemma~\ref{L: injections p=3} $(c)$ then either $\rhobar$ or $g\circ\rhobar$
    has determinant $\chi_5$. If $f\equiv2\pmod 4$ set $\theta(\Frob_F^2)=1$ and
    $\iota$ as in Lemma~\ref{L: injections p=3} (d). Finally if $f\equiv0\pmod 4$
    set $\theta(\Frob_F^2)=-1$ and iota any injection of $C_3\rtimes C_4$ in
    $\SL_2(\F_5)$.

    In both the principal series and supercuspidal cases the result follows by
    Proposition~\ref{Rubin}.
\end{proof}

\subsection{Sufficient conditions for $p=2$} To finish the proof of Theorem~\ref{T: Main Theorem for p=2}, we will use
$\ell=3$ to build the representation $\bar{\rho}:=\iota\circ\rho$ as in
Proposition~\ref{Rubin}. The strategy is the same as for $p=3$, but more
involved due to the existence of exceptional inertial types. Observe that
Lemma~\ref{Pot Good Uniform} solves the case $C_2$ and Lemma~\ref{L: Quadratic
    Twists of Ell Curves} reduces the case $C_6$ to the case $C_3$ which is treated
in Lemma~\ref{L: tame for p=2 and p=3}. Thus it remains to prove the case $C_4$
of $(i)$ and $(ii)$ and cases $(iii)$ and $(iv)$. We will need the following
preparatory results.

\begin{lemma}\label{L: injections p=2}
    Let $G$ be a group.
    The maps $\iota$ defined on generators of $G$ is an isomorphism from $G$ as a subgroup of $\GL_2(\ma C)$ to $G$ as a subgroup of $\GL_2(\ma F_3)$.
    \begin{enumerate}[(a)]
        \item $G=\SD_{16}$
              \begin{align*}
                  \begin{pmatrix}
                      i & 0  \\
                      0 & -i
                  \end{pmatrix}
                   & \xrightarrow{\iota}
                  \begin{pmatrix}
                      0 & 2 \\
                      1 & 0
                  \end{pmatrix}
                  \quad\quad\quad
                  \begin{pmatrix}
                      0 & i \\
                      i & 0
                  \end{pmatrix}
                  \xrightarrow{\iota}
                  \begin{pmatrix}
                      1 & 2 \\
                      2 & 2
                  \end{pmatrix}
                  \quad\quad\quad
                  \begin{pmatrix}
                      \zeta_8 & 0         \\
                      0       & \zeta_8^3
                  \end{pmatrix}
                  \xrightarrow{\iota}
                  \begin{pmatrix}
                      2 & 2 \\
                      1 & 2
                  \end{pmatrix}
              \end{align*}
        \item $G=D_4$
              \begin{align*}
                  \begin{pmatrix}
                      i & 0  \\
                      0 & -i
                  \end{pmatrix}
                   & \xrightarrow{\iota}
                  \begin{pmatrix}
                      0 & 2 \\
                      1 & 0
                  \end{pmatrix}
                  \quad\quad\quad
                  \begin{pmatrix}
                      0 & 1 \\
                      1 & 0
                  \end{pmatrix}
                  \xrightarrow{\iota}
                  \begin{pmatrix}
                      1 & 0 \\
                      0 & 2
                  \end{pmatrix}
              \end{align*}
        \item $G=C_8$
              \begin{align*}
                  \begin{pmatrix}
                      \zeta_{8} & 0            \\
                      0         & \zeta_8^{-1}
                  \end{pmatrix}
                  \xrightarrow{\iota}
                  \begin{pmatrix}
                      1 & 2 \\
                      1 & 1
                  \end{pmatrix}
              \end{align*}
    \end{enumerate}
\end{lemma}
\begin{proof}
    Direct computation.
\end{proof}

\begin{lemma}\label{L: key lemma about exceptionals}
    Let $L/F$ satisfy {\bf (H1)} and $K_i$ satisfy {\bf (H2)} and let $M$ be the compositum of the $K_i$. Let $\rho:W_L\xrightarrow{}\GL_2(\ma C)$ be a triply imprimitive representation induced form $K_1$ by a character $\theta$. Assume $\rho(W_L)=Q_8$. Then, $\rho^{\sigma}\simeq\rho$ for all $\sigma\in\Gal(L/F)$ if and only if $\theta^{\sigma}\vert_{W_M}=\theta\vert_{W_M}$ for all $\sigma\in\Gal(M/F)$. In particular, $\rho$ is induced from $K_1$, $K_2$
    and~$K_3$. Moreover, if $\tilde{\theta}$ is another character such that $\tilde{\theta}\vert_{I_{K_1}}=\theta\vert_{I_{K_1}}$ then $\tilde{\theta}^{\sigma}\vert_{W_M}=\tilde{\theta}\vert_{W_M}$.
\end{lemma}
\begin{proof}
    Assume that $\rho^{\sigma}\simeq\rho$ for all $\sigma\in\Gal(L/F)$.
    Since the action of $\sigma\in\Gal(M/F)$ on $\rho$ depends only on the class of $\sigma$ in $\Gal(L/F)$, we also have  $\rho^{\sigma}\simeq\rho$ for all $\sigma\in\Gal(M/F)$. Note also that $\theta$ has order 4 because $\Image \rho \simeq Q_8$ and
    \begin{equation}\label{eq:induction}
        \rho^{\sigma}=(\Ind_{W_{K_i}}^{W_L} \theta)^{\sigma}=\Ind_{W_{\sigma(K_i)}}^{W_L} \theta^{\sigma}.
    \end{equation}
    From $\rho\simeq\rho^{\sigma}$ and {\bf (H2)}, it follows that $M$ is contained in the common field fixed by all $\rho^\sigma$. Since the field cut out by an induction is the field cut out by the induced character, we have that~$\theta\vert_{W_M}$ and $\theta^{\sigma}\vert_{W_M}$ cut out the same quadratic extension of $M$, equivalently,
    $\theta\vert_{W_M} = \theta^{\sigma}\vert_{W_M}$.

    For the converse, assume $\theta^{\sigma}\vert_{W_M}=\theta\vert_{W_M}$ for all
    $\sigma\in\Gal(M/F)$. Note that, if $L/F$, $N/L$ are Galois extensions and
    $\bar{\sigma}\in\Gal(\overline{F}/F)$, then the field $\bar{\sigma}(N)$ depends
    only on the residue class of $\bar{\sigma}$ in $\Gal(L/F)$. Therefore, in what
    follows, given $\sigma\in\Gal(L/F)$, we will write~$\sigma(N)$ for the field
    $\bar{\sigma}(N)$, where $\bar{\sigma}$ is any extension of $\sigma$
    to~$\overline{F}$.

    The representation $\rho^{\sigma}$ satisfies~\eqref{eq:induction} and has image
    $Q_8$ (as conjugation by $\sigma$ is an automorphism of $W_L \subset W_F$, the
    quotients of $W_L$ by $\ker\rho$ and by $\ker\rho^{\sigma}=\sigma \cdot
        \ker\rho \cdot \sigma^{-1}$ are isomorphic), therefore $\rho^{\sigma}$ is
    triply imprimitive induced from $\sigma(K_1)$ and two other fields. Let $N$ be
    the field cut out by $\rho$, and so the field cut out by $\rho^{\sigma}$ is
    $\sigma(N)$. We claim that $\sigma(N)=N$ for all $\sigma\in\Gal(M/F)$. Hence,
    since $\rho$ and $\rho^{\sigma}$ both have image~$Q_8$, we conclude from
    Lemma~\ref{L: Q8 has only one rep} that $\rho \simeq \rho^\sigma$. In
    particular, $\rho$ can be induced form any field of the form~$\sigma(K_1)$ with
    $\sigma\in \Gal(M/F)$ which by hypothesis {\bf (H2)} are $K_1,K_2$ and $K_3$.

    We now prove the claim which concludes the proof of the converse. Let $\sigma
        \in \Gal(M/F)$.

    If $M\subset N$, then $\sigma(M)=M\subset\sigma(N)$. Hence $N$ is the field cut
    out by $\theta\vert_{W_M}$ and $\sigma(N)$ is the field cut out by
    $\theta^{\sigma}\vert_{W_M}$. Since
    $\theta\vert_{W_M}=\theta^{\sigma}\vert_{W_M}$ by hypothesis, we have
    $\sigma(N) = N$.

    To complete the proof of the claim, we now assume $M\not\subset N$ and will
    obtain a contradiction. In this case, the field cut by $\theta\vert_{W_M}$ is
    $N\cdot M$ and the field cut out by $\theta^{\sigma}\vert_{W_M}$ is
    $\sigma(N)\cdot M$. From $\theta\vert_{W_M}=\theta^{\sigma}\vert_{W_M}$, we get
    $N\cdot M=\sigma(N)\cdot M$. Thus, for all $\sigma \in G_F$, we have
    $$\sigma(N\cdot M) = \sigma(N)\cdot \sigma(M)= \sigma(N) \cdot M = N\cdot M,$$
    so $N\cdot M$ is a Galois extension of~$F$, and also of~$L$; this is summarized
    in diagram~\ref{diagram}.
    \begin{figure}[h!]
        \[
            \xymatrix{
                & N\cdot M \ar@{-}[dl] \ar@{-}[dr] \ar@{-}[dd] & \\
                N \ar@{-}[dd] & & \sigma(N) \ar@{-}[dd] \\
                & M \ar@{-}[dd] \ar@{-}[dl] \ar@{-}[dr] & \\
                K_1 &  & \sigma(K_1) \\
                &L \ar@{-}[ul] \ar@{-}[ur]&
            }
        \]\caption{}\label{diagram}
    \end{figure}

    We have $\Gal(N/L)=Q_8$ and also $[NM:N]=2$ since $M\not\subset N$. It follows
    that $\Gal(NM/L)$ is an extension of
    \href{https://people.maths.bris.ac.uk/~matyd/GroupNames/1/e5/Q8byC2.html}{$Q_8$
        by $C_2$}. Thus $\Gal(NM/L)$ is isomorphic to $C_2\times Q_8$ or $C_4\rtimes
        C_4$. From ${\bf (H1)}$ and ${\bf (H2)}$, there exists $\sigma\in\Gal(L/F)$
    such that $\sigma(K_1)=K_2$, $\sigma(K_2)=K_3$ and $\sigma^3=1$. The fields $N
        \supset K_1$, $\sigma(N) \supset K_2$ and $\sigma^2(N) \supset K_3$ are
    different and, by the Galois correspondence, the group $\Gal(NM/L)$ contains
    three different normal subgroups of order 2 with $Q_8$ quotients. However, the
    groups \href{https://www.lmfdb.org/Groups/Abstract/16.12}{$C_2\times Q_8$} and
    \href{https://www.lmfdb.org/Groups/Abstract/16.4}{$C_4\rtimes C_4$} have, at
    most, two such normal subgroups, a contradiction.

    To prove the last statement, we check that
    $\theta\vert_{W_M}=\theta^{\sigma}\vert_{W_M}$ depends only on $\theta$
    restricted to~$I_{K_1}$. Indeed, the condition
    $\theta\vert_{W_M}=\theta^{\sigma}\vert_{W_M}$ is equivalent to the combination
    of $\theta\vert_{I_M}=\theta^{\sigma}\vert_{I_M}$ and
    $\theta(\Frob_M)=\theta^{\sigma}(\Frob_M)$ for any Frobenius element $\Frob_M
        \in W_M$. The first condition clearly follows from $\theta\vert_{I_{K_1}} =
        \theta\vert_{I_{K_1}}^\sigma$ and, for the second condition, we note that $$
        \theta(\Frob_M)=\theta^{\sigma}(\Frob_M)\iff
        \theta(\Frob_M)=\theta(\sigma^{-1}\Frob_M\sigma)\iff
        \theta(\Frob_M\sigma^{-1}\Frob_M^{-1}\sigma)=1 $$ and the conclusion follows as
    $\Frob_M\sigma^{-1}\Frob_M^{-1}\sigma\in I_{K_1}$; indeed, $W_F/I_F \simeq
        \hat{\ma Z}$ is abelian, thus the class of
    $\Frob_M\sigma^{-1}\Frob_M^{-1}\sigma$ is trivial in it and, moreover, since
    $M/F$ is Galois, it also belongs to $W_M$ thus it belongs to $W_M \cap I_F =
        I_M \subset I_{K_1}$.
\end{proof}

\begin{proposition}\label{P: Creation of exceptionals}
    Let $L,\rho$ and $\tau$ satisfy the hypothesis of Theorem~\ref{T: Main Theorem for p=2} $(iv)$. Then, there exists an exceptional representation $\tilde{\rho}$ of $F$ such that $\tilde{\rho}\vert_{I_F}=\tau$ and:\begin{itemize}
        \item If $\mu_3\subset F$ and $L/F$ is cubic unramified then $\tilde{\rho}(I_F)=Q_8$
              and $\tilde{\rho}(W_F)=\SL_2(\ma F_3)$
        \item If $\mu_3\subset F$ and $L/F$ is cubic ramified then
              $\tilde{\rho}(I_F)=\SL_2(\ma F_3)$ and $\tilde{\rho}(W_F)=\SL_2(\ma F_3)$
        \item If $\mu_3\not\subset F$ then $\tilde{\rho}(I_F)=\SL_2(\ma F_3)$ and
              $\tilde{\rho}(W_F)=\GL_2(\ma F_3)$ or $\tilde{\rho}(W_F)=2O$, the binary
              octahedral group.
    \end{itemize}
    Moreover, we have the following conductor exponent relation
    \begin{equation}\label{Eq: m for exceptionals}
        m(\tilde{\rho})=\frac{m(\rho)-2}{e(L/F)}+2.
    \end{equation}

\end{proposition}

\begin{proof}
    The assumption $\rho^{\sigma}\simeq\rho$ for all~$\sigma\in\Gal(L/F)$ ensures that $\rho$ extends to a representation $\tilde{\rho}:W_F\xrightarrow{}\GL_2(\ma C)$ (see~\cite[42.4 Proposition]{Bushnell2006}). Let $\tilde{N}$ be the field fixed by~$\ker \tilde{\rho}$.

    Suppose first that $\mu_3\subset F$. Let $N/L$ extension fixed by the kernel of
    $\rho$. We claim that $L\subset\tilde{N}$. Otherwise, $L\cap \tilde{N}=F$ hence
    $\Image \tilde{\rho}\simeq\Image\rho\simeq Q_8$ and $\Gal(N/F)=C_3\times Q_8$.
    A quick look at the group
    \href{https://www.lmfdb.org/Groups/Abstract/24.11}{$C_3\times Q_8$} in
    LMFDB~\cite{lmfdb} shows that all subgroups of this group are normal; thus, all
    intermediate fields $F\subset K\subset N$ are Galois over $F$, contradicting
    the assumption {\bf (H2)}, proving the claim. Thus $N=\tilde{N}$.

    In this situation, $\Gal(N/F)\simeq\Image\tilde{\rho}$ is an extension of
    \href{https://people.maths.bris.ac.uk/~matyd/GroupNames/1/C3byQ8.html}{$C_3$ by
        $Q_8$}, so it is either $Q_8\times C_3$ or~$\SL_2(\F_3)$. From the above, the
    former is not possible, thus $\Gal(N/F)\simeq \SL_2(\F_3)$. Since $L\subset
        \tilde{N}$, it is clear that, if $L/F$ is unramified, then
    $\tilde{\rho}(I_F)=Q_8$ and $\tilde{\rho}\vert_{I_F}=\tilde{\rho}\vert_{I_L}=
        \rho|_{I_L}\simeq\tau$. If $L/F$ is ramified, then $\tilde{\rho}(I_F)=\SL_2(\ma
        F_3)$ and, since $\det\tilde{\rho}\vert_{I_L}=\det\tau\vert_{I_L}=1$, either
    $\tilde{\rho}$ or $\psi_L^{\pm1}\tenso\tilde{\rho}$ has unramified determinant.
    Thus, by Lemma~\ref{L: if SL23 coincide on Q8 are equal}, one of
    $\tilde{\rho},\psi_L\tenso\tilde{\rho}$ or $\psi_L^{-1}\tenso\tilde{\rho}$ has
    inertia type $\tau$. These three representations have image $\SL_2(\F_3)$ and
    the same projective image, so the conclusion follows.

    Let us assume now that $\mu_3\not\subset F$. Set
    $\rho_0:=\tilde{\rho}\vert_{W_{F(\mu_3)}}$ and let $N$ be the field fixed by
    the kernel of $\rho_0$. Since $L/F(\mu_3)$ is a cubic ramified extension, by
    the previous case, we have that $\Image\rho_0\simeq \SL_2(\ma F_3)\simeq
        \Gal(N/F(\mu_3))$ and $L\subset N$. We claim $F(\mu_3)\subset\tilde{N}$.
    Otherwise $F(\mu_3)\cap \tilde{N}=F$ hence $\Gal(N/F)\simeq \SL_2(\ma
        F_3)\times C_2$. But this group does not have an $S_3\simeq\Gal(L/F)$ quotient,
    proving the claim. Then $\tilde{N}=N$ and $\Gal(N/F)$ is an extension of
    \href{https://people.maths.bris.ac.uk/~matyd/GroupNames/1/C2bySL(2,3).html}
    {$C_2$ by $\SL_2(\ma F_3)$} that has an $S_3$ quotient; this implies
    $\Gal(N/F)\simeq \GL_2(\ma F_3)$ or $\Gal(N/F)\simeq 2O$. Note that, when
        $\mu_3\not\subset F$, condition {\bf (H1)} for $L/F$ implies $I_F/I_L\simeq
            C_3$. In particular, we can apply a combination of Lemma~\ref{L: if SL23
            coincide on Q8 are equal} and Lemma~\ref{L:Exceptionals have unramified det} to
        deduce that $\tilde{\rho}\vert_{I_F}\simeq \tau$.

    For $G\in\{\SL_2(\F_3),\GL_2(\F_3),2O\}$, we have $G/\Center(G)\in\{A_4,S_4\}$
    so this representations are exceptional. The equation for the conductors
    follows from a simple computation using the definition of conductor (see
    \cite[Chapter IV,\S 10]{Silverman1994}), and the fact that $L/F$ is at worst
    tamely ramified.
\end{proof}

Recall that for a representation $\rho : W_F \to \GL_2(\ma C)$ and a finite
extension $L/F$ we write $\rho_L$ for the restriction of $\rho$ to~$W_L$.

\begin{proposition}\label{P: exceptional image switch}
    Let $F/\Q_2$ be a finite extension such that $\mu_3\not\subset F$. Let $\rho:W_F\xrightarrow{}\GL_2(\ma C)$ be an exceptional representation satisfying $\rho(I_F) \simeq \SL_2(\F_3)$ and $\rho(W_F)\simeq 2O$ or $\GL_2(\F_3)$. Then, there exists an exceptional representation $\tilde{\rho}$ of $W_F$ such that $\tilde{\rho}\vert_{I_F} = \rho\vert_{I_F}$ and $\tilde{\rho}(W_F)\simeq\GL_2(\F_3)$ or $2O$, respectively. \end{proposition}

\begin{proof}
    We will prove the case $\rho(W_F)\simeq2O$. Let $N:=\Fbar^{\ker \rho}$.
    By the Galois correspondence, we have the following diagrams of subfields of $N$ and subgroups of~\href{https://www.lmfdb.org/Groups/Abstract/diagram/48.28}{$2O$}, where $L/F$ satisfies {\bf (H1)} and $K_i$ satisfy {\bf (H2)}.

    \begin{figure}[h]
        \[
            \xymatrix{
            &&N\ar@{-}[d]&&&&&&1\ar@{-}[d]&\\
            &&M\ar@{-}[dr]\ar@{-}[d]\ar@{-}[dl]&&&&&&C_2\ar@{-}[dr]\ar@{-}[d]\ar@{-}[dl]&\\
            &K_1\ar@{-}[dr]\ar@{-}[d]\ar@{-}[dl]&K_2\ar@{-}[d]&K_3\ar@{-}[dl]&&&
            &C_4\ar@{-}[dr]\ar@{-}[d]\ar@{-}[dl]&C_4\ar@{-}[d]&C_4\ar@{-}[dl]\\
            L_3\ar@{-}[dr]&L_2\ar@{-}[d]&L\ar@{-}[dr]\ar@{-}[dl]&&&&
            Q_8\ar@{-}[dr]&C_8\ar@{-}[d]&Q_8\ar@{-}[dr]\ar@{-}[dl]&&\\
            &C\ar@{-}[dr]&&F(\mu_3)\ar@{-}[dl]&&&
            &Q_{16}\ar@{-}[dr]&&\SL_2(\F_3)\ar@{-}[dl]&\\
            &&F&&&&&&2O&
            }
            \]
    \end{figure}
    Since $\rho$ is exceptional, from Proposition~\ref{P: base change of exceptional types}, we know that $\rho$ becomes triply imprimitive over~$L$ and simply
    imprimitive over $C$. Moreover, from the lattice of subgroups, we see that~$\rho_L$ must be induced
    from the three~$K_i$ and $\rho_C$ is induced from~$L_2$ by a character $\theta$ of order 8. Since
    $\operatorname{Ind}_{W_{K_1}}^{W_L}\theta\vert_{W_{K_1}}$
    cuts out the field $N$, by Lemma~\ref{L: Q8 has only one rep}, we have
    $\rho_L \simeq \operatorname{Ind}_{W_{K_1}}^{W_L}\theta\vert_{W_{K_1}}$.

    We set $\chi:=\theta\vert_{I_{K_1}}$.

    The extension $K_1/L$ is ramified while $K_1/L_i$ is unramified for $i=2,3$, so
    $I_{K_1} = I_{L_2} = I_{L_3}$. Let $s_i:=\Frob_{L_i} \in G_{L_i} \subset G_C$
    be Frobenius elements of $L_i$ and so also of~$C$; in particular, they are
    lifts in $\Gal(\bar{C}/C)$ of the generators of $\Gal(K_1/L_2)$ and
    $\Gal(K_1/L_3)$, respectively. Then, $s_{L}:=s_2\cdot s_3$ is a lift of the
    generator of $\Gal(K_1/L)$. From Lemma~\ref{L:Exceptionals have unramified
        det}, we know that $\det \rho$ is unramified, hence $\det \rho_L = 1$ because
    $N/L$ is totally ramified. Thus $\chi^{s_L}=\chi^{-1}$ by
    Lemma~\ref{cyclotomicdeterminant}. Since $\theta$ is a character of~$W_{L_2}$,
    it follows that $\theta^{s_2}=\theta$. Thus, $\chi^{s_3}=\chi^{-1}$ and so we
    can apply Lemma~\ref{L: Image of unramified inductions} to the induction
    $\operatorname{Ind}_{W_{K_1}}^{W_{L_3}}\theta\vert_{W_{K_1}}$, to deduce that
    $\theta\vert_{W_{K_1}}(s_3^2)=-1$.

    Note that $s_3^2$ is a Frobenius element in $W_{K_1}$. Consider the character
    $\tilde{\theta}$ of $W_{K_1}$ defined by $\tilde{\theta}\vert_{I_{K_1}}:=\chi$
    and $\tilde{\theta}(s_3^2)=1$; denote by $\tilde{N}$ the filed fixed by $\ker
        \tilde{\theta}$. From above, we have $(\tilde{\theta}|_{I_{K_1}})^{s_3} =
        \chi^{s_3}=\chi^{-1}$, and so, by Lemma~\ref{L: Image of unramified
        inductions}, the image of
    $\operatorname{Ind}_{W_{K_1}}^{W_{L_3}}\tilde{\theta}$ is $D_4 \simeq
        \Gal(\tilde{N}/L_3)$.

    We now focus on the representation
    $\tilde{\rho}_L:=\operatorname{Ind}_{W_{K_1}}^{W_L}\tilde{\theta}$. We have
    $\tilde{\rho}_L |_{I_L} \simeq \operatorname{Ind}_{I_{K_1}}^{I_L}\chi \simeq
        \rho_L|_{I_L} \simeq Q_8$ and, since $\tilde{\rho}$ is the induction the order
    4 character $\tilde{\theta}$, we also have $\Image \tilde{\rho}_L \simeq Q_8$.
    Now, since $\rho_L^{\sigma}=\rho_L$ for all $\sigma \in \Gal(L/F)$, by
    Lemma~\ref{L: key lemma about exceptionals}, we have
    $\theta^{\sigma}\vert_{W_M}=\theta\vert_{W_M}$. By the same lemma, since
    $\tilde{\theta}\vert_{I_{K_1}}=\theta\vert_{I_{K_1}}$ we also have
    $\tilde{\theta}^{\sigma}\vert_{W_M}=\tilde{\theta}\vert_{W_M}$ which again by
    Lemma~\ref{L: key lemma about exceptionals} implies
    $\tilde{\rho}_L^{\sigma}\simeq\tilde{\rho}_L$ for all $\sigma \in \Gal(L/F)$.
    From Proposition~\ref{P: Creation of exceptionals}, it follows that
    $\tilde{\rho}_L$ descends to a representation~$\tilde{\rho}$ of~$W_F$ with
    image either $2O$ or $\GL_2(\F_3)$. By construction, the field $\tilde{N}$ cut
    out by $\tilde{\rho}$ contains $L_3$ and thus $\Image\tilde{\rho}$ contains a
    $\Gal(\tilde{N}/L_3)\simeq D_4$ subgroup. This is only possible if $\Image
        \tilde{\rho}\simeq
        \href{https://www.lmfdb.org/Groups/Abstract/diagram/48.29}{\GL_2(\F_3)}$.
    Finally, observe that $\tilde{\rho}$ contains the matrix~$\left( \begin{smallmatrix} -1 & 0 \\ 0 & -1 \end{smallmatrix}\right)$ in the image of inertia, hence $\PP\tilde{\rho} \simeq 2O/\Center(2O) \simeq S_4$ so $\tilde{\rho}$ is an exceptional representation. Since $\rho\vert_{I_L}\simeq\tilde{\rho}\vert_{I_L}$ by Lemma~\ref{L: if SL23
        coincide on Q8 are equal} and Lemma~\ref{L:Exceptionals have unramified det} it follows that $\rho\vert_{I_F}\simeq\tilde{\rho}\vert_{I_F}$, completing the proof.

    The case $\rho(W_F)\simeq\GL_2(\F_3)$ follows analogously by noting that the
    only difference in the subgroup diagram is that the leftmost group $Q_8$ is
    replaced by a $D_4$.
\end{proof}

From the previous two propositions we obtain the following corollary.

\begin{corollary}\label{C: We can choose GL23}
    Let $L,\rho$ and $\tau$ satisfy the hypothesis of Theorem~\ref{T: Main Theorem for p=2} $(iv)$. Assume further that $\mu_3\not\subset F$. Then, there exists an exceptional representation $\tilde{\rho}$ of $W_F$ such that $\tilde{\rho}\vert_{I_F}=\tau$, $\tilde{\rho}(I_F)=\SL_2(\ma F_3)$ and $\tilde{\rho}(W_F)=\GL_2(\ma F_3)$.
\end{corollary}

\begin{example}
    The previous proposition predicts the existence of a Galois extensions $N/\Q_2$ with $\Gal(N/\Q_2)=2O$ over which the exceptional elliptic curves appearing in~\cite[Table 17]{dembélé2024galoisinertialtypeselliptic}
    attain good reduction. For example, the curve
    \href{https://www.lmfdb.org/EllipticCurve/Q/648/a/1}{648.a1} with model $E:y^2 =
        x^3-3x-1$ obtains good reduction over the
    extension $N/\Q_2$ defined by the polynomial
    \[x^{48} + 4x^{46} + 6x^{44} + x^{40} + 2x^{36} + 4x^{34} + 2x^{32} +
        4x^{30} + 6x^{28} + 2x^{25} + x^{24} + 4x^{23} + 2x^{21} \]\[+ 6x^{20} +
        4x^{19} + 6x^{18} + 6x^{17} + 4x^{16} + 4x^{15} + 6x^{13} + 6x^{9} +
        3x^{8} + 6x^{6} + 2x^{5} + 6x^{4} + 3.\] This field has Galois group $2O$ and ramification index $24$. For the list of
    analogous fields for the other exceptional curves see~\cite[\href{https://github.com/3nr1c/inertial-types/blob/main/data/2OFields.m}{List of $2O$ fields}]{MAGMAFiles}
\end{example}

We can finish the proof of Theorem~\ref{T: Main Theorem for p=2}.

\begin{proof}[End of proof of Theorem~\ref{T: Main Theorem for p=2}]
    The proof of the $C_4$ case in parts $(i)$ and $(ii)$ is analogous to the corresponding cases for $p=3$, where we use the following choices instead:

    Case $f\equiv 1\pmod2$: for $\tau$ a principal series, take
    $\theta(Frob_F)=\zeta_8$, $\rho=\theta\oplus\theta^{-1}$ and $\iota$ the
    injection in Lemma~\ref{L: injections p=2}~(c); for $\tau$ unramified
    supercuspidal, take $\theta(\Frob_F^2)=1$, $\rho=\Ind_{W_K}^{W_F}\theta$ and
    $\iota$ the injection in Lemma~\ref{L: injections p=2} (b).

    Case $f\equiv 0\pmod 2$: for $\tau$ a principal series, take
    $\theta(\Frob_F)=1$, $\rho=\theta\oplus\theta^{-1}$ and $\iota$ any injection
    of $C_4$ into $\GL_2(\ma F_3)$; for $\tau$ unramified supercuspidal, take
    $\theta(\Frob_F^2)=-1$, $\rho=\Ind_{W_K}^{W_F}\theta$ and $\iota$ any injection
    of $Q_8$ into $\GL_2(\ma F_3)$.

    Let us now prove case $(iii)$. Let $\tau$ satisfy its hypotheses and $\chi$ be
    an inertia character associated with~$\tau$. We can extend $\chi^A$ to a
    character $\theta^A$ of $K^{\times}$ by defining $\theta^A(\pi)$ for $\pi$ a
    uniformizer in $K$. By Lemma~\ref{L: explicit tiply conditions}, we know the
    value of $\lambda=\chi(\pi/s(\pi))$ in terms of $f$. Since the determinant of
    $\tau$ is trivial and $\Image \tau =Q_8$, using Lemma~\ref{L: Elliptitc Curve
        SCR conditions for chi}, we see that $\chi$ satisfies the hypothesis of
    Lemma~\ref{L: Image of ramified inductions} with $d=4$. We can now use this to
    control the the image of $\rho$ by choosing $\theta(\pi)$ adequately. We now
    split into 2 cases.

    If $f\equiv 1 \pmod 2$, then $\chi(\pi/s(\pi))=\pm i$. After replacing~$\pi$
    by~$u\pi$ where $u$ is a unit with $\chi(u)=i$, we can assume that
    $\chi(\pi/s(\pi))=-i$. We define $\theta(\Frob_K)=\zeta_8$. Thus $\Image \rho
        \simeq \SD_{16}$ by Lemma~\ref{L: Image of ramified inductions} $(c)$. Let
    $\rhobar:=\iota\circ\rho$ where $\iota$ is given in Lemma~\ref{L: injections
        p=2} (a). Hence $\det\rhobar(\Frob_K)=2\in\F_3$ and $\det\rhobar(I_F)=1$ which
    is $\chi_3$. The conclusion follows from Proposition~\ref{Rubin}.

    If $f\equiv0 \pmod 2$, then $\lambda=\pm 1$. We define $\theta(\Frob_K)=1$ if
    $\lambda=1$ or $\theta(\Frob_K)=i$ if $\lambda=-1$. Thus $\Image \rho \simeq
        Q_8$ by Lemma~\ref{L: Image of ramified inductions}~(a) and~(b), respectively.
    We consider $\rhobar=\iota\circ\rho$ where $\iota$ is any injection $\iota$ of
    $Q_8$ into $\SL_2(\ma F_3)$. Thus $\det\rhobar=\chi_3$, completing the proof of
    case~$(iii)$.

    Finally, we prove $(iv)$. Let $\tau$ satisfying its hypotheses; again, we split
    into two cases.

    If $f\equiv 0 \pmod 2$ (or equivalently $\mu_3\subset F$), then, by
    Proposition~\ref{P: Creation of exceptionals}, there is $\rho$ with
    $\rho\vert_{I_F}=\tau$ and $\Image \rho \simeq \SL_2(\ma F_3)$. Thus, any
    injection of $\Image \rho$ into $\GL_2(\ma F_3)$ automatically gives $\rhobar$
    with determinant $\chi_3$ which is trivial in this case.

    If $f\equiv 1\pmod2$, then, by Corollary~\ref{C: We can choose GL23}, there is
    $\rho$ with $\rho\vert_{I_F}=\tau$, $\rho(I_F)\simeq\SL_2(\ma F_3)$ and $\Image
        \rho \simeq \GL_2(\ma F_3)$. Let $\rhobar=\iota\circ\rho$ where $\iota$ is any
    isomorphism between $\Image \rho$ and $\GL_2(\ma F_3)$. Since the image of
    $\rhobar$ is generated by $\rhobar(I_F)$ and $\rhobar(\Frob_F)$ it follows that
    $\det\rhobar(\Frob_F)=-1$, otherwise the image of $\rhobar$ would be $\SL_2(\ma
        F_3)$.
\end{proof}

\begin{remark}
    Observe that the representations $\rho$ used in the proof of Theorems~\ref{T: Main Theorem for p=3} and~\ref{T: Main Theorem for p=2} are not Weil-Deligne representations of an elliptic curve. Indeed, $\rho$ has finite image while the Weil-Deligne representation associated to elliptic curves do not.
\end{remark}

\section{The main theorems in terms of characters}
\label{S: charactersformulation}

Our main theorems provide a set of necessary and sufficient conditions for an
inertial type to arise from an elliptic curve. To explicitly find such types,
we will express these conditions in terms of characters so that they can be
computed in a reasonable amount of time.

Recall form \S\ref{Notation} that $\varepsilon_K$ is the restriction to inertia
of the character $\psi_K$ of $W_F$ associated with a quadratic extension $K/F$.
Recall also Definition~\ref{D: Associated inertia characters} of inertia
character associated with an inertial type.

\begin{theorem}\label{T: explicit conditions p=3}
    Let $F/\Q_3$ be a finite extension.
    Let $K / F$ and $\chi^A$ satisfy one of the following:
    \begin{enumerate}[(i)]
        \item $\chi^A$ is a character of $\cl O_F^{\times}$ such that
              $\operatorname{Ord}(\chi^A)\in\{2,3,4,6\}$;
        \item  $K/F$ is quadratic unramified and $\chi^A$ is a character of
              $\cl O_K^{\times}$ such that
              $\operatorname{Ord}(\chi^A)\in\{3,4,6\}$ and
              $\chi^A\vert_{\cl O_F^{\times}}=\varepsilon_K^{A}$;
        \item $K/F$ is quadratic ramified and $\chi^A$
              is a character of $\cl O_K^{\times}$ such that
              $\operatorname{Ord}(\chi^A)=6$
              and $\chi^A\vert_{\cl O_F^{\times}}=\varepsilon_K^{A}$.
    \end{enumerate}
    Then, the inertia type associated to $\chi$ satisfies the case of Theorem~\ref{T: Main Theorem for p=3} with the same item number. Moreover, any $\tau$ satisfying the cases of Theorem~\ref{T: Main Theorem for p=3} has an associated inertia character satisfying the corresponding case above.
\end{theorem}

\begin{proof}
    Case $(i)$ is clear. For cases $(ii)$ and $(iii)$, note that the assumptions on the order of~$\chi^A$ implies that the induction $\Ind_{W_K}^{W_F} \theta$ of any character $\theta$ of $W_K$ restricting to $\chi$ on inertia is irreducible by Corollary~\ref{cor: the inductions are really irreducible}; moreover, from Lemma~\ref{cyclotomicdeterminant} we get the equivalence
    between~$\tau$ having trivial determinant and the condition $\chi^A\vert_{\cl O_F^{\times}}=\varepsilon_K^{A}$. Finally, for case (iii), from Lemma~\ref{L: Inductions have image Q8} applied with $H=I_K$ and $G=I_F$ we obtain the equivalence between the images of $\tau$ being $C_3\rtimes C_4$ and the order of $\chi^A$ being $6$.
\end{proof}

In the next statement, $\pi$, $\pi_{K_i}$ and $\pi_M$ denote a uniformizers in
$K$, $K_i$ and $M$, respectively. Moreover, $s \in W_F$ and $s_i \in W_L$ are
non-trivial lifts of the non-trivial elements in $\Gal(K/F)$ and $\Gal(K_i/L)$,
respectively.

\begin{theorem}\label{T: explicit conditions p=2}
    Let $F/\Q_2$ be a finite extension.
    Let $K / F$, $L/F$, $K_i / L$ and $\chi^A$ satisfy one of the following cases:
    \begin{enumerate}[(i)]
        \item $\chi^A$ is a character of $\cl O_F^{\times}$ such that
              $\operatorname{Ord}(\chi^A)\in\{2,3,4,6\}$.
        \item $K/F$ is quadratic unramified and $\chi^A$ is a character of $\cl O_K^{\times}$ such that $\operatorname{Ord}(\chi^A)\in\{3,4,6\}$ and
              $\chi^A\vert_{\cl O_F^{\times}}=\varepsilon_K^{A}$.
        \item $K/F$ is quadratic ramified and $\chi^A$
              is a character of $\cl O_K^{\times}$ such that $\operatorname{Ord}(\chi^A)=4$ and $\chi^A\vert_{\cl O_F^{\times}}=\varepsilon_K^{A}$. Assume further,
              \subitem (a) $\chi^A(\pi/s(\pi))=\pm 1$ if
              $\mu_3\subset F$ or
              \subitem (b) $\chi^A(\pi/s(\pi))=\pm i$
              if $\mu_3\not\subset F$.
        \item Let $L/F$ satisfy hypothesis {\bf (H1)} and let $K_1,K_2,K_3$ be quadratic
              extensions of $L$ satisfying {\bf (H2)}. Let $M$ be the compositum of the $K_i$
              and $U_M^i=\operatorname{Nm}^M_{K_i}(\cl O_M^{\times})$. Let $\chi^A$ be a
              character of any of the fields $K_i$ satisfying two conditions \subitem (a)
              $\operatorname{Ord}(\chi^A)=4$, $\chi^A\vert_{\cl
                  O_L^{\times}}=\varepsilon_{K_i}^{A}$ and $\chi^A(\pi_{K_i}/s_i(\pi_{K_i}))=\pm
                  1$; \subitem (b) for all $\sigma\in\Gal(M/F)$,
              $$\quad(\chi^A)\circ\operatorname{Nm}^{M}_{K_1}=(\chi^{\sigma})^A\circ\operatorname{Nm}^{M}_{\sigma(K_1)}\quad\text{and}\quad
                  \chi^A\circ \operatorname{Nm}(\pi_M/\sigma(\pi_M))=1.$$
    \end{enumerate}
    Then, the inertia type associated to $\chi$ satisfies the case of Theorem~\ref{T: Main Theorem for p=2} with the same item number. Moreover, any $\tau$ satisfying the cases of Theorem~\ref{T: Main Theorem for p=2} has an associated inertia character satisfying the corresponding case above.
\end{theorem}

\begin{proof}
    The proof of $(i),(ii)$ and the first statement of $(iii)$ is completely analogous to the same cases in Theorem~\ref{T: explicit conditions p=3}.
    For the subcases $(a)$ and $(b)$ of Theorem~\ref{T: Main Theorem for p=2} $(iii)$,
    Lemma~\ref{L: explicit tiply conditions} shows the equivalence between being simply/triply imprimitive and the value of $\chi^A(\pi/s(\pi))$.

    We will now prove $(iv)$. Let $\chi^A$ be as in its statement.

    Since $\mu_3 \subset L$, from assumption~(a), we see that the hypothesis of
    case (iii) including its subcase (a) hold for $K_i / L$ and $\chi^A$, which we
    proved to be equivalent to the assumptions of Theorem~\ref{T: Main Theorem for
        p=2}~(iii)~(a). Therefore, the ramified supercuspidal inertial type $\tau :=
        \Ind_{I_{K_i}}^{I_L} \chi$ is triply imprimitive with trivial determinant and
    image $Q_8$. Now, by Corollary~\ref{C: Rho with Q8 exits}, there is a triply
    imprimitive representation $\rho$ of $W_L$ with inertial type $\tau$ and
    $\Image \rho=Q_8$. Moreover, from the properties of the Artin map, it follows
    that assumption (b) is equivalent to
    $\theta\vert_{W_M}=\theta^{\sigma}\vert_{W_M}$. It now follows from
    Lemma~\ref{L: key lemma about exceptionals} that $\rho^{\sigma}\simeq\rho$ and
    $\rho$ is induced from $K_1,K_2$ and $K_3$.

    To prove the last statement, assume that $L/F$, $\rho$ and $\tau$ satisfies the
    hypothesis in Theorem~\ref{T: Main Theorem for p=2}~$(iv)$. In particular, we
    have $\rho|_{I_L} = \tau|_{I_L} \simeq \Ind_{I_{K_i}}^{I_L} \chi_i$ for $i=1,2$
    or $3$, where $\chi_i = \theta_i |_{I_{K_i}}$, and so we can take $\chi^A =
        \chi_i^A$ for any~$i$. It follows from case~$(iii)$ that assumption (a) holds.
    Finally, since $\rho^{\sigma}\simeq\rho$ for all $\sigma\in\Gal(L/F)$, again
    from Lemma~\ref{L: key lemma about exceptionals} we conclude that, for all
    $\sigma\in \Gal(M/F)$, we have $\theta_i\vert_{W_M} =
        \theta_i^{\sigma}\vert_{W_M}$, equivalently assumption (b) holds.

\end{proof}

\section{The algorithm and Examples}
\label{S: algo}

We have seen in Section~\ref{S: charactersformulation} that finding all
inertial types arising from elliptic curves defined over a 3-adic or 2-adic
field $F$ is equivalent to finding all characters satisfying the conditions in
Theorem~\ref{T: explicit conditions p=3} or Theorem~\ref{T: explicit conditions
    p=2}, respectively. In this section, we discuss our implementation of this idea
into an algorithm in {\tt Magma} together with examples of its application.

\begin{alg}\label{A: Algortihm for p=3} Let $\tau_E$ be the inertial type of an elliptic curve $E/F$.
    \begin{enumerate}
        \item For each class of inertial types, i.e, principal series, unramified
              supercuspidal, ramified supercuspidal and exceptional, we find all extensions
              of $F$ where an inertia character of the given class can exist.
        \item Using the bound $m(\tau_E)\leq2+3v_F(3)+5v_F(2)$ together with
              equations~\eqref{m for PS}--\eqref{m for SCR} and~\eqref{Eq: m for
                  exceptionals}, we obtain, for each extension $K/F$ computed in (1), an upper
              bound $m_K$ for the conductor of all possible inertia characters $\chi$ of $\cl
                  O_K^{\times}$ associated with~$\tau_E$.
        \item For each $K$ and corresponding~$m_K$, compute all characters of $(\cl
                  O_K^{\times}/\p^{m_K})^{\times}$ satisfying the hypothesis in Theorems~\ref{T:
                  explicit conditions p=3} and~\ref{T: explicit conditions p=2}.
        \item Finally, compute the chain of groups and maps $(\cl
                  O_K/\p^{(f+1)})^{\times}\xlongrightarrow{\varphi_f}(\cl O_K/\p^f)^{\times}$
              and, for each character computed in (3), we find its conductor, that is, we
              compute the smallest $f$ for which $\chi$ factors through $(\cl
                  O_K^{\times}/\p^f)$ via the map~$\varphi_f$.
    \end{enumerate}
\end{alg}

The following lemma shows that the condition $\chi^{A}\vert_{\cl
    O_F^{\times}}=\varepsilon_K^{A}$ ensuring (by
Lemma~\ref{cyclotomicdeterminant}) that the determinant of nonexceptional
supercuspidal types is trivial can be tested algorithmically.

\begin{lemma}\label{L: Is enough to check the ui}
    Let $\tau$ be a nonexceptional supercuspidal type defined over $F$ and~$\chi : I_K \to \ma C^\times$ its associated character with conductor exponent~$m(\chi)=m$. Then, the condition $\chi^{A}\vert_{\cl O_F^{\times}}=\varepsilon_K^{A}$ is equivalent to $\chi^A(u_i)=\varepsilon_K^{A}(u_i)$ for $u_i$ elements in $\cl O_F^{\times}$ whose image in $(\cl O_F/\ger{p}^r)^{\times}$ form a set of generators, for any $ r \geq \operatorname{max}\{\lceil \frac{m}{2} \rceil,m(\varepsilon_K)\}$.
\end{lemma}

\begin{proof}
    Since $r\geq m(\varepsilon_K)$, the character~$\varepsilon_K^A$ factors through the projection $\cl O_F^{\times}\xrightarrow{h_F}(\cl O_F/\ger{p}^r)^{\times}$, so $\varepsilon_K^A=\tilde{\varepsilon}_K\circ h_F$ for some character $\tilde{\varepsilon}_K$ of $(\cl O_F/\ger{p}^r)^{\times}$;
    similarly, $\chi=\tilde{\chi}\circ h_K$ where $\cl O_K^{\times}\xrightarrow{h_K}(\cl O_K/\ger{q}^m)^{\times}$. Now, since $r\geq\lceil\frac{m}{2}\rceil$, the inclusion $\cl O_F^{\times}\xrightarrow{i}\cl O_K^{\times}$ descends to the quotients, that is, there is a map $j : (\cl O_F/\ger{p}^r)^{\times} \hookrightarrow (\cl O_K/\ger{q}^m)^{\times}$ such that $h_K\circ i=j\circ h_F$. Then $\chi\vert_{\cl O_F^{\times}}=\chi\circ i=\tilde{\chi}\circ h_K\circ i=\tilde{\chi}\circ j\circ h_F$ and $\varepsilon_K^A=\tilde{\varepsilon}_K\circ h_F$ thus it is enough to check the condition on the generators of $(\cl O_F/\ger{p}^r)^{\times}$. Conversely, if $\chi^{A}\vert_{\cl O_F^{\times}}=\varepsilon_K^{A}$ then $\chi^A(u)=\varepsilon_K^{A}(u)$ for any $u\in\cl O_F^{\times}$, in particular, for a set of generators.
\end{proof}

\begin{theorem} Let $p=2$ or $p=3$. Let $F/\Q_p$ be a finite extension.
    The procedure in Algorithm~\ref{A: Algortihm for p=3} computes all inertial types arising from elliptic curves defined over $F$ with potentially good reduction together with their conductor. It always finishes.
\end{theorem}
\begin{proof}
    For each class of inertial type  there is a finite set of fields $K$ for admitting inertia characters; indeed, for principal series we have $K=F$, for unramified supercuspidal $K$ is the unique unramified quadratic extension of $F$, for ramified supercuspidal $K/F$ is ramified quadratic and for the exceptional types, the field $K$ is one of $K_1, K_2, K_3$ satisfying property~{\bf (H2)}. In particular, among all cases, we have $[K : F] \leq 6$.
    Since the number of extensions of a $p$-adic field with fixed degree is finite, it follows that there is a finite number of fields $K$ be considered. For each of these fields, once we compute the bound~$r$ for the conductor exponent in step (2), we obtain $r$ finite
    groups $(\cl O_K/\p^f)^{\times}$ with $1 \leq f \leq r$.
    Thus we can search for all characters of $(\cl O_K^{\times}/\p^r)$ satisfying the conditions of Theorem~\ref{T: explicit conditions p=2} or Theorem~~\ref{T: explicit conditions p=3}; this is a finite check due to Lemma~\ref{L: Is enough to check the ui}. Once we have these characters, there is a finite number of maps $\varphi_f$ through which they can factor, giving the conductor.
\end{proof}

\begin{remark}
    It is important to notice that, by searching for the inertia characters as characters $\chi^A$ of $\cl O_K^{\times}$, we ensure that $\chi$ can always be extended to a character $\theta$ of $W_K$ that produces the desired type.
\end{remark}

The above algorithm has been implemented in a {\tt Magma} package that is
available together with documentations and examples at~\cite{MAGMAFiles}. The
code implements an optimize computation of all inertia types and fields of a
given field $F$. We have also precomputed this data for all quadratics and
cubics of $\Q_2$ and $\Q_3$.

\begin{example}\label{Example:Q4}
    The following shows the computation of all inertial types (without the inertial fields) defined over~$\Q_4$ with our {\tt Magma} package. This computation was run on a consumer laptop under~$30$ seconds. The computation of the corresponding inertial fields takes less than $1$ hour. However, the complexity of the problem growths rapidly with the degree of the field $F$; for instance, for each cubic extensions of $\Q_2$, the algorithm takes about $20$ minutes to compute all inertial types and approximately $1$ day to compute all inertia fields.
    \begin{lstlisting}
> AttachSpec("spec");
> Q2 := pAdicField(2, 100);
> Q4 := UnramifiedExtension(Q2, 2);
> time PS, SCU, SCR, Ex8, Ex24 := InTypes(Q4);
Time: 27.000
> InTypesSummary(PS,SCU,SCR,Ex8,Ex24);

========================================
    Computed Inertia Types : Summary
========================================

PrincipalSeries  : 27
SCU              : 12
SCR              : 32
Ex8              : 24
Ex24             : 72
----------------------------------------
Total inertial types : 167

------------------------------------------------------------
e   v(N)  Character order  Description               Count  
------------------------------------------------------------
2   4     2                principal series          3      
2   6     2                principal series          4      
3   2     3                principal series          1      
4   6     4                principal series          4      
4   6     4                supercuspidal unramified  4      
4   8     4                supercuspidal unramified  8      
4   8     4                principal series          8      
6   4     6                principal series          3      
6   6     6                principal series          4      
8   5     4                supercuspidal ramified    4      
8   5     4                exceptional, Q8           8      
8   6     4                exceptional, Q8           16     
8   6     4                supercuspidal ramified    4      
8   8     4                supercuspidal ramified    24     
24  3     4                exceptional, SL(2,3)      3      
24  4     4                exceptional, SL(2,3)      9      
24  6     4                exceptional, SL(2,3)      12     
24  7     4                exceptional, SL(2,3)      48     
------------------------------------------------------------

    \end{lstlisting}
\end{example}

\begin{remark}\label{R: comments on algorithm}
    As the above example shows, computing the inertial types, or equivalently, the
    associated inertia characters is very fast. Computing the inertial
    fields is computationally heavier and is clearly the bottleneck of the algorithm if one seeks to compute and tabulate all inertial types over all $p$-adic fields of a given degree. This is expected, since computing the inertial fields involves finding the field fixed by the kernels of the inertia characters. For
    instance, to compute the inertial field of an exceptional type of a cubic
    extension of $\Q_2$, one needs to find fields of degree $144$. In cases where one is not
    interested in the exceptional types, the algorithm allows for the computation
    of only the non-exceptional types, which provides a significant speedup.
\end{remark}

\begin{remark}
    Since internally the package does not work with elliptic curves, it can easily be adapted to compute inertial types with more general images and conductor bounds.
\end{remark}

\begin{example} We have determined all inertial types arising from elliptic curves defined over all quadratic and cubic extensions of $\Q_p$ for $p=2$ and $p=3$. From Theorem~\ref{Inertia field determine the type}, these types are determined by their inertia fields, which we also computed. All data is available in~\cite{MAGMAFiles}.
\end{example}

\begin{remark}\label{R: Curves are bad} We want to highlight the importance of our main theorems being an equivalence. Indeed, without knowing that all types computed by our algorithm arise from elliptic curves, we would need to find an elliptic curve to realize each type as originally done in~\cite{dembélé2024galoisinertialtypeselliptic} over $\Q_p$. This could easily become a bottleneck larger than the one discussed in Remark~\ref{R: comments on algorithm}; indeed, by randomly generating curves over $\Q_4$, it took us more than one week to find curves realizing all types  computed in Example~\ref{Example:Q4}, see~\cite{GitQp2}. Obviously, when there are few inertial types left to be realized, it becomes harder to find the curves realizing them; in particular, if there is just one inertial type left to be realized after many hours, we would not know whether we have not yet found a curve realizing it or if  does not arise from elliptic curves.
\end{remark}

Finally, we end with an application of our results outside the realm of
elliptic curves. Namely, the following example is used in~\cite{PacTor} to
determine the inertial types of certain hypergeometric motives relevant for
studying the generalized Fermat equation $x^5 + y^p = z^3$.

\begin{example}\label{Example Ariel}
    Consider the following hyperelliptic curves defined over $\Q$
    $$ \cl C_1:~y^2=5x^6-12x^5-\frac{5}{4}x^3+\frac{1}{64}\qquad \text{ and }\qquad \cl C_2:~y^2=5x^6-12x^5+\frac{45}{4}x^3+\frac{81}{64}
    $$
    whose Jacobians $\cl J_i$ become of $\GL_2$-type over $K=\Q(\sqrt{5})$ as explained in~\cite{PacTor} with real multiplications by~$K$. Note that 3 is inert in $K$ and let $\rho_i: G_{\Q_9}\xrightarrow{}\GL_2(K_\p)$ be the restriction to the decomposition group at $(3)$ of the 2-dimensional $\p$-adic representation associated with~$\cl J_i / K$.
    In the proof of \cite[Theorem 7.18]{PacTor}, it is explained that
    the representations $\rho_i$ are ramified supercuspidal with trivial determinant, have inertia image $C_3\rtimes C_4$ and conductor exponent $m(\rho_i)=3$. Therefore, the inertia type of $\rho_i$ must be among the types we computed over $\Q_9$.
    Looking at the data in \cite{MAGMAFiles} for $\Q_9$, we see that there are $8$
    types with the required properties. Running through the list of corresponding inertial fields, we check over which ones the hyperelliptic curves attains good reduction. This yields the same field for both curves, defined by the polynomial $x^{12} + 3x^4 + 3$ over $\Q_9$.
    By our results, this inertial type must also arise from an elliptic curve and, indeed, it is easy to check that it is the inertial type of the base change of the curve
    $y^2=x^3-432$ to $\Q_9$.
\end{example}

\end{document}